\definecolor{mygreen}{rgb}{0.1,0.75,0.2}
 \newtheorem{thm}{Theorem}[section]
 \newtheorem{cor}[thm]{Corollary}
 \newtheorem{lem}[thm]{Lemma}
 \newtheorem{prop}[thm]{Proposition}
 \theoremstyle{definition}
 \newtheorem{defn}{Definition}
 \theoremstyle{remark}
 \newtheorem{rem}{Remark}
 \numberwithin{equation}{section}
\newcommand{\pt}{\partial}
\newcommand{\eps}{\varepsilon}
\newcommand{\sij}{{ij}}
\newcommand{\bR}{{\mathbb{R}}}
\newcommand{\ud}{\,\mathrm{d}}
\newcommand{\8}{\infty}
\newcommand{\F}{\mathcal{F}}
\newcommand{\cS}{\mathcal{S}}
\newcommand{\cF}{\mathcal{F}}
\newcommand{\cL}{\mathcal{L}}
\newcommand{\cE}{\mathcal{E}}
\newcommand{\cnu}{\bm v }
\newcommand{\ptf}{\mathcal{L}}
\newcommand{\el}{E_{\mathrm{els}}}
\newcommand{\elc}{\mathcal{C}_{\mathrm{els}}}
\newcommand{\hel}{\hat{E}_{\mathrm{els}}(\bm \varphi \vert \mathbf u)}
\newcommand{\helB}{\hat{E}_{\mathrm{els}}(\bm \varphi \vert \mathbf u; B_R)}
\newcommand{\heg}{\hat{E}_{\Gamma}( \bm \varphi \vert  \mathbf{u})}
\newcommand{\hegg}{\hat{E}_{\Gamma_e}(\bm \varphi \vert  \mathbf{u})}
\newcommand{\het}{\hat{E}(\bm \varphi \vert  \mathbf u)}
\newcommand{\hetB}{\hat{E}(\bm \varphi \vert  \mathbf u; B_R)}
\newcommand{\epsv}{\eps_{\varphi}}
\newcommand{\sigvv}{\sigma_{\varphi}}
\begin{document}

\title[Peierls-Nabarro model for curved dislocation]{Existence and uniqueness of bounded stable solutions to the Peierls-Nabarro model for curved dislocations}

\author{Hongjie Dong}
\author{Yuan Gao}

\address{Division of Applied Mathematics, Brown University, 182 George Street, Providence, RI, 02912, USA}
\email{hongjie\_dong@brown.edu}

\address{Department of Mathematics, Duke University, Durham NC 27708, USA}
\email{yuangao@math.duke.edu}

\date{\today}

\begin{abstract}
We study the well-posedness of the vector-field Peierls-Nabarro model for curved dislocations with a double well potential and a bi-states limit at far field. Using the Dirichlet to Neumann map, the 3D Peierls-Nabarro model is reduced to a nonlocal scalar Ginzburg-Landau equation. We derive an integral formulation of the nonlocal operator, whose kernel is anisotropic and positive when Poisson's ratio $\nu\in(-\frac12, \frac13)$. We then prove that any bounded stable solutions to this nonlocal scalar Ginzburg-Landau equation has a 1D profile, which corresponds to the PDE version of flatness result for minimal surfaces with anisotropic nonlocal perimeter.
Based on this, we finally obtain that steady states to the nonlocal scalar equation, as well as the original Peierls-Nabarro model, can be characterized as a one-parameter family of straight dislocation solutions to a rescaled 1D Ginzburg-Landau equation with the half Laplacian.
\end{abstract}
\keywords{Nonlocal Allen-Cahn equation, L\'ame system, De Giorgi hyperplane conjecture, Bistable profile, Rigidity}

\maketitle
\section{Introduction}
Materials defects such as dislocations are important structures in crystalline materials and play essential roles in the study of
plastic and mechanical behaviors of materials. {Along} the  dislocation line, there is a small region (called the dislocation core region) of heavily
distorted atomistic structures with shear displacement jump across a slip plane, denoted {by}
$$
\Gamma:=\{(x_1,x_2,x_3):~x_3=0\}.
$$
The classical dislocation theory \cite{HL}  regards the dislocation core as a
singular point so that the solution can be solved explicitly based on the linear elasticity theory, which, however, is not able to unveil detailed core structure of dislocations. Instead,  the Peierls-Nabarro (PN) model introduced by \textsc{Peierls and Nabarro} \cite{Peierls, Nabarro}  is
a multiscale continuum model for displacement $\mathbf{u}=(u_1, u_2, u_3)$ that incorporates the atomistic effect by introducing a nonlinear
potential describing the atomistic misfit interaction across the slip plane $\Gamma$ of the dislocation.  More precisely, assume two elastic continua $x_3>0$ and $x_3<0$ are connected by a nonlinear atomistic potential
$\gamma$ depending on shear displacement jump
$$
([u_1], [u_2]):=(u_1(x_1,  x_2, 0^+)-u_1(x_1,  x_2, 0^-), ~u_3(x_1,  x_2, 0^+)-u_3(x_1,  x_2, 0^-))
$$
across the slip plane $\Gamma$. {Although} the total jump increment is determined by the magnitude of Burgers vector, the true spread of the jump increment $([u_1], [u_2])$  is determined globally by the whole system, particularly for curved dislocation with variant  orientations. Given the magnitude of the Burgers vector, which means that given the bi-states boundary conditions at far field (see \eqref{BC_far}), the problem turns out to be a minimization problem of the total energy
$$
E(\mathbf{u}):=E_{\text{els}}(\mathbf{u})+E_{\text{mis}}(\mathbf{u});
$$
see detailed definitions {of the} elastic energy $E_{\text{els}}(\mathbf{u})$ and the misfit energy $E_{\text{mis}}(\mathbf{u})$ in Section \ref{sec2.1}. The resulting Euler-Lagrange equations for {the} vector-field $\mathbf{u}$ is a L\'ame system with {a} nonlinear boundary condition; see  \eqref{maineq}.

For a straight dislocation with uniform displacement in the $x_2$ direction, the 2D L\'ame system with the nonlinear boundary condition can be reduced to a nonlocal equation (also known as nonlocal Ginzburg-Landau equation with double-well potential $\gamma$)
\begin{equation}\label{old}
(-\Delta)^{\frac12}u_1 (x_1)= - \gamma'(u_1(x_1)), \quad x_1\in \bR
\end{equation}
with {the} bi-states at far field $u_1(\pm\8)=\pm1$.
For a special sinusoidal misfit potential reflecting phenomenologically lattice periodicity $\gamma(u_1)=\frac{1}{\pi^2}(1+ \cos(\pi u_1))$, with certain physical constants for computational simplicity, the solution can be solved explicitly \cite{HL, Xiang_2006} with shear displacement
\begin{equation*}
u_1(x_1)= \frac{2}{\pi} \arctan(x_1) \sim \pm1-\frac{2}{\pi x_1}, \quad \text{ as }x_1\to \pm \8.
\end{equation*}
Equation \eqref{old}, as well as
the corresponding scalar displacement $\tilde{u}(x_1,x_2)$, as the harmonic extension of $u_1(x_1,0)$, is well studied recently at rigorous mathematical level.
For a general misfit potential $\gamma$ with $C^{2,\alpha}$ regularity \cite{CS05}, \textsc{Cabr\'e and Sol\`a-Morales} established the existence  (and the uniqueness up to translations) of  monotonic solutions with  the sharp decay rate $\frac1{x_1}$ for the bistable profile. They also proved the bistable profile is a local minimizer with respect   to perturbations with compact support for the total energy
$$
E(\tilde{u})=\frac 1 2\int |\nabla \tilde{u}|^2\ud x_1\ud x_2+\int_\Gamma \gamma(\tilde{u})\ud x_1
$$
of the scalar model using {the} harmonic extension.  Without using the harmonic extension,  \textsc{Palatucci, Savin, and Valdinoci}  directly worked on the nonlocal equation $(-\Delta)^{\frac12}\tilde{u}|_\Gamma=-\gamma'(\tilde{u})$ on $\Gamma$ and improved  the global minimizer  result  by proving quantitative growth estimates of the total energy \cite{PSV13}. {In} \cite{GLLX19}, {the authors} established rigorously the connection between the vector-field 2D L\'ame system and the {reduced} equation \eqref{old} at both the equation and energy level by considering a perturbed energy.    Besides,  for the De Giorgi-type hyperplane conjecture for stable  solutions to \eqref{old} (also known as De Giorgi-type hyperplane conjecture for {the Laplace} equation with nonlinear boundary reaction),  {it was proved in} \cite{CS05} that in 2D, bounded stable solutions {have} 1D {profiles}.
For a general nonlocal Ginzburg-Landau  equation
$$
(-\Delta )^{s} \tilde{u} = -\gamma'(\tilde{u}), \quad x\in \mathbb{R}^{d},
$$
 we refer {the readers} to some recent results for $d=2, 0<s<1$ by \textsc{Cabr\'e and Sire} \cite{CaSi}; for $d=3, s=\frac12$  by \textsc{Cabr\'e and Cinti}   \cite{cabre2010energy}; for $d=3,\, \frac12<s<1$  by \textsc{Cabr\'e and Cinti}   \cite{cabre2014sharp}  and  for $d=3, s=\frac12$  by \textsc{Figalli and Serra} \cite{FS20}; and related energy estimates for $0<s<1$ by \textsc{Gui and Li} \cite{Gui19} and flatness result for $0<s<\frac12$ by \textsc{Dipierro, Serra and Valdinoci} \cite{dipierro2016nonlocal, dipierro2016improvement} and for $\frac12\leq s<1$ by \textsc{Savin} \cite{savin2018rigidity}.

However, to our best knowledge, {so far} there is no result for the 3D vector-field system \eqref{maineq}, which cannot be treated as an analogue scalar model above.  In fact, the vector-field displacement is essential to determine
the long-range elastic interaction associated with dislocations and partial separation within {the} dislocation
core. We are especially interested in the curved dislocation \cite{Xiang_Wei_Ming_E_2008}, which is the most common case,  and their properties are anisotropic in space and depend on the orientations of dislocations.  The main goal of this paper is to study in which cases, the steady state (equilibrium) of the PN model \eqref{maineq} has to be a straight dislocation. This also establishes the foundation of further researches on the dynamics and long-time behaviors of curved dislocations.

There are in general two strategies to study the {solutions} to the full system \eqref{maineq}. One is to study the local vector-field system with nonlinear Neumann boundary conditions. However, the challenges come from the lack of maximum principle and the lack of the compactness in unbounded domain. {The other} strategy is to reduce the 3D full system to a nonlocal 2D problem using Dirichlet to Neumann map, which, in the curved dislocation case, is still a coupled nonlocal system; see \eqref{eq2D}. Under the assumption that the misfit potential $\gamma$ depends only on the shear jump displacement $[u_1]$. We will further {reduce} it to a scalar nonlocal equation (see \eqref{maineq1D}) and study the resulting nonlocal operator, which shows anisotropic property in different directions. The kernel of the new nonlocal operator is still homogeneous but anisotropic; see Propositions \ref{prop_kernel1} and \ref{prop_kernel2}.  Especially, the kernel remains positive only for Poisson's ratio $\nu$ in the range $(-\frac12,\frac13).$

With a positive anisotropic kernel, a natural question is the existence and uniqueness of solutions to the nonlocal equation. Since the straight dislocation is {a} special solution to the full system, we are particularly interested in the {characterization} of the solutions, i.e., if the misfit potential depends only on $[u_1]$, whether the straight solution is the {only} stable solution to the full system \eqref{maineq}. We will  follow the idea in \cite{CSV19}, which proves quantitative flatness estimates for {the} stable sets with nonlocal perimeters (see also \cite{FS20, Gui19, dipierro2016nonlocal, savin2018rigidity} for PDE version with fractional Laplacian), to first show {that any} bounded stable solution to \eqref{maineq1D} {has} a 1D profile; see Theorem \ref{thm-1d}.
As a consequence, {all} the {solutions} to \eqref{maineq1D} as well as \eqref{maineq} {can be characterized as} a rotation of straight dislocation. This is analogue to {the} flatness result for the isotropic case with the half Laplacian. However, for the general case when the misfit potential depends both on $[u_1]$ and $[u_2]$, the characterization of solutions to the coupled nonlocal Ginzburg-Landau system \eqref{eq2D} remains open.

The paper will be organized as follows. In Section \ref{sec2}, we propose the governing equations for the full vector-field system and then reduce it {to} a nonlocal equation \eqref{maineq1D} by the Dirichlet to Neumann map. In Section  \ref{sec3}, we derive the integral formulation of the new nonlocal operator and study the positivity of the resulting anisotropic kernel. In Section \ref{1Dprop}, we prove {that any} bounded stable solution to the reduced nonlocal equation \eqref{maineq1D} {has} a 1D monotone profile and is given by a rotation of straight dislocation. The derivation of {the} Euler-Lagrange equation and the Dirichlet to Neumann map will be given in Appendices \ref{appA} and \ref{appB}, respectively.

\section{Full system and reduced nonlocal system by the Dirichlet to Neumann map}\label{sec2}
In this section, we will first derive the Euler-Lagrange equation for the PN model, which is a minimization problem of the total energy consisting of {the} elastic energy and the misfit energy induced by a dislocation; see Section \ref{sec2.1}. Then we will derive the reduced nonlocal systems/equation by the Dirichlet to Neumann map in Section \ref{sec2.2}.
\subsection{Vector-field full system with nonlinear boundary condition}\label{sec2.1}
In the PN model, the two half spaces separated by the slip plane $\Gamma=\{(x_1,x_2,x_3); x_3=0\}$ of the dislocation are assumed to be linear elastic continua, and the two half spaces are connected by a nonlinear potential
energy across the slip plane that incorporates  atomistic interactions. Let us first clarify the total energy, which is indeed infinite in $\bR^3$ due to the presence of a dislocation \cite{GLLX19}, and then derive the Euler-Lagrange equation by regarding the solution as a local minimizer of the total energy.

  Let $\mathbf u=(u_1,u_2, u_3)$ be the displacement vector. The total energy $E(\mathbf u)$  of the whole system  is
\begin{equation}\label{E2.2}
E(\mathbf u):= E_\mathrm{els}(\mathbf u)+E_\mathrm{mis}(\mathbf u).
\end{equation}
Let $G>0$ be the shear modulus and  $\nu\in[-1,\frac12]$ be Poisson's ratio.
The first term in the total energy in Eq.~\eqref{E2.2} is the elastic energy in the two half spaces defined as $$E_{\text{els}} = \int_{\mathbb{R}^3\backslash \Gamma} \frac12 \eps: \sigma \ud x= \int_{\mathbb{R}^3\backslash \Gamma} \frac12 \eps_{ij} \sigma_{ij} \ud x,$$
where $\varepsilon$ is the strain tensor
$$
\eps_\sij=\frac{1}{2}(\pt_j u_i+\pt_i u_j) \quad \text{ for }i,j=1,2,3,\quad \partial_i:=\frac{\partial}{\partial x_i},
$$
and $\sigma$ is the stress tensor
$$
\sigma_\sij=2G \eps_\sij+\frac{2\nu G}{1-2\nu} \eps_{kk}\delta_{ij} \quad \text{ for }i,j=1,2,3.
$$
 Here  $\delta_{ij}=1$ if $i=j$ and $0$ otherwise. We also used the Einstein summation convention that
$$
\eps_{kk}=\sum_{k=1}^3 \eps_{kk}\quad \text{and}\quad \sigma_{ij}\varepsilon_{ij}=\sum_{i,j=1}^3 \sigma_{ij}\varepsilon_{ij}.
$$
The second term in the total energy in Eq.~\eqref{E2.2} is the misfit energy across the slip plane due to nonlinear atomistic interactions
\begin{equation*}
 {E_\mathrm{mis}}(\mathbf u):=\int_{\Gamma} \gamma(u_1^+-u_1^-, u_2^+-u_2^-) \ud \Gamma =\int_{\Gamma} W (u_1^+, u_2^+) \ud \Gamma,
 \end{equation*}
 where $u_i^\pm=u_i^+(x_1, x_2, 0^\pm)$ for $i=1,2$.
 For the analysis of the PN model for an edge dislocation in this paper, we assume that the nonlinear potential $W \in C_b^{2, \alpha}(\mathbb{R}^2; \bR)$ for some $\alpha\in(0,1)$. In practice, $W$ will be a periodic potential indicating {the} periodic lattice structure of the materials with several minimums, for instance $W(v_1,v_2)=\cos v_1 + \sin v_2,$ and will be specific later.

The equilibrium structure of a general curved dislocation is obtained by minimizing the total energy in Eq.~\eqref{E2.2} subject to the boundary condition at the slip plane
\begin{equation}\label{BC}
\begin{aligned}
u_1^+(x_1, x_2, 0^+)= - u_1^-(x_1,x_2, 0^- ),\\
u_2^+(x_1, x_2, 0^+)=  -u_2^-(x_1,x_2, 0^-),\\
u_3^+(x_1, x_2, 0^+)= -u_3^-(x_1,x_2, 0^-).
\end{aligned}
\end{equation}
To focus on nontrivial solutions indicating the presence of a curved dislocation, we consider the following bi-states far field boundary condition
 for $u_1$,
 \begin{equation}\label{BC_far}
 u_1^+(\pm\8,x_2, 0^+)=\pm 1 \quad \text{ for any } x_2\in \mathbb{R},
 \end{equation}
 where we chose certain magnitude of the Burgers vector for simplicity.

However, due to the slow decay rate of the strain tensor $\eps$, we have the same issue with straight dislocation as in \cite{CS05, PSV13, GLLX19}, i.e., the elastic energy is infinite.   Whenever the total energy is infinite, we define the energy minimizer in {the} perturbed sense with respect to a perturbation with compact support.
To be precise, we define the perturbed elastic energy of $\mathbf u$ with respect to any perturbation fields $ \bm\varphi  \in C^\infty(\mathbb{R}^3\backslash \Gamma; \mathbb{R}^3)$  and $ \bm\varphi $ has compact support in some $B_R\subset \bR^3$  as
\begin{equation*}
\begin{aligned}
\hel:=& \int_{\mathbb{R}^3\backslash \Gamma} \frac{1}{2} (\eps_u + \eps_\varphi): (\sigma_u + \sigvv)  -\frac{1}{2} \varepsilon_u:\sigma_u~\ud x\\
    =& \int_{\mathbb{R}^3\backslash \Gamma} \frac{1}{2}  [(\epsv)_{ij}(\sigvv)_{ij}+(\epsv)_{ij}(\sigma_u)_{ij}+ (\eps_u)_{ij}(\sigvv)_{ij} ]   ~\ud x\\
    =& \el(\bm\varphi ) + \elc(\mathbf u, \bm\varphi ),
\end{aligned}
\end{equation*}
where the cross term
\begin{equation*}
  \elc(\mathbf u, \bm\varphi ):= \int_{\mathbb{R}^3\backslash \Gamma} \frac{1}{2}  (\epsv:\sigma_u+ \eps_u:\sigvv )   ~\ud x= \int_{\mathbb{R}^3\backslash \Gamma} \frac{1}{2}  [(\epsv)_{ij}(\sigma_u)_{ij}+ (\eps_u)_{ij}(\sigvv)_{ij} ]   ~\ud x,
\end{equation*}
{and} $\eps_u, \sigma_u$ and $\epsv, \sigvv$ are the strain and stress tensors corresponding to $\mathbf u$ and $\bm\varphi$, respectively.
Then the perturbed total energy is defined as
\begin{equation}\label{perEtotal}
\het:= \hel+\int_\Gamma W(u_1+\varphi_1)-W(u_1) \ud x
\end{equation}
and the energy minimizer is defined as $\mathbf u$ such that $\het \geq 0$ {for any $\bm\varphi$ with compact support}.
\begin{rem}
Since $\mathbf u$ and $\bm \varphi$ coincide outside $B_R$, we always know {that} $\hel$ is equivalent to the local perturbed elastic energy
\begin{equation*}
\helB:=\int_{B_R\backslash \Gamma} \frac{1}{2} (\eps_u + \eps_\varphi): (\sigma_u + \sigvv)  -\frac{1}{2} \varepsilon_u:\sigma_u~\ud x= E_{\rm els}(\mathbf u + \bm \varphi; B_R)-E_{\rm els}(\mathbf u; B_R),
\end{equation*}
and $\het$ is equivalent to
\begin{equation*}
\hetB:=\helB+\int_{B_R\cap\Gamma} W(u_1+\varphi_1)-W(u_1) \ud x=E(\mathbf u + \bm \varphi; B_R)-E (\mathbf u; B_R).
\end{equation*}
Therefore, we will follow the convention \cite{CS05, PSV13} that refers $\mathbf u$ as a local minimizer. In the remaining {part} of the paper, whenever we consider the equivalence of two infinite energy, it is understood in the perturbed sense \cite{GLLX19} or equivalently, in the local sense in any balls $B_R$.
\end{rem}

\begin{defn}\label{minimizer}
We call the function $\mathbf u$ a local minimizer of total energy $E$ if it satisfies
\begin{equation*}
E(\mathbf u + \bm \varphi; B_R)-E(\mathbf u;B_R)\geq 0
\end{equation*}
for any perturbation $ \bm\varphi  \in C^\infty(\mathbb{R}^3\backslash \Gamma; \mathbb{R}^3)$ supported in some $B_R$ satisfying
\begin{equation}\label{bcphi}
\begin{aligned}
\varphi_1^+(x_1, x_2, 0^+)&= - \varphi_1^-(x_1, x_2, 0^-),\\
\varphi_2^+(x_1, x_2, 0^+)&=  -\varphi_2^-(x_1, x_2, 0^-),\\
\varphi_3^+(x_1, x_2, 0^+)&=  \varphi_3^-(x_1, x_2, 0^-).
\end{aligned}
\end{equation}
\end{defn}

We have the following lemma for the Euler--Lagrange equation with respect to the total energy  $E(\mathbf u)$, which gives the governing equations for the vector-field full system. The proof of this lemma will be included in Appendix \ref{appA} for completeness.

\begin{lem}\label{Lem2.2}
Assume that
 $\mathbf u\in C^2(\mathbb{R}^3\backslash \Gamma; \bR^3) $ satisfying boundary conditions \eqref{BC} and \eqref{BC_far}  is a local  minimizer of the total energy $E$ in  the sense of Definition \ref{minimizer}. Then $\mathbf u$ satisfies the  Euler--Lagrange equation
\begin{equation}\label{maineq}
\begin{aligned}
&(1-2\nu)\Delta \mathbf u+\nabla( \nabla\cdot \mathbf u)=0 \quad {\rm in} \ \mathbb{R}^3\backslash\Gamma, \vspace{1ex}\\
&\sigma_{13}^+ +\sigma_{13}^-=\pt_{1}W (u_1^+, u_2^+) \quad {\rm on} \ \Gamma,\\
&\sigma_{23}^+ +\sigma_{23}^-=\pt_{2}W (u_1^+, u_2^+) \quad {\rm on} \ \Gamma,\\
&\sigma_{33}^+ =\sigma_{33}^- \quad {\rm on}\ \Gamma.
\end{aligned}
\end{equation}
\end{lem}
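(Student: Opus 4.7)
The plan is to compute the first variation of the (perturbed) total energy at $t=0$ along admissible perturbations $t\bm\varphi$ and then extract the bulk PDE and each of the three boundary conditions by choosing $\bm\varphi$ appropriately. Since $\bm\varphi$ is supported in some ball $B_R$, the calculation takes place inside $B_R\setminus\Gamma$, so no convergence issue arises despite the elastic energy being infinite. Plugging $t\bm\varphi$ into \eqref{perEtotal} and differentiating at $t=0$, the quadratic term in $\bm\varphi$ vanishes and the cross term $\elc$ is linear in $\bm\varphi$, so the minimality condition $\het\ge 0$ forces
\begin{equation*}
\int_{\mathbb{R}^3\backslash\Gamma}\sigma_u:\eps_\varphi\ud x+\int_\Gamma\bigl[\pt_1 W(u_1^+,u_2^+)\varphi_1^++\pt_2 W(u_1^+,u_2^+)\varphi_2^+\bigr]\ud\Gamma=0
\end{equation*}
for every admissible $\bm\varphi$. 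Here I used the symmetry $\eps_u:\sigma_\varphi=\sigma_u:\eps_\varphi$ coming from the linear constitutive relation.

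Next I would integrate by parts separately in the upper and lower half spaces. Using symmetry of $\sigma_u$, one has $\sigma_{ij}(\pt_j\varphi_i)=\sigma_{ij}\eps_{\varphi,ij}$, and integration by parts in each half space yields bulk terms $\int(\pt_j\sigma_{ij})\varphi_i\ud x$ and boundary terms on $\Gamma$. With outward normal $-\mathbf e_3$ from $\{x_3>0\}$ and $+\mathbf e_3$ from $\{x_3<0\}$, the boundary contribution becomes
\begin{equation*}
\int_\Gamma\bigl[-\sigma_{i3}^+\varphi_i^++\sigma_{i3}^-\varphi_i^-\bigr]\ud\Gamma.
\end{equation*}
Applying the sign relations \eqref{bcphi}, namely $\varphi_1^-=-\varphi_1^+$, $\varphi_2^-=-\varphi_2^+$, and $\varphi_3^-=\varphi_3^+$, this combines to
\begin{equation*}
-\int_\Gamma(\sigma_{13}^++\sigma_{13}^-)\varphi_1^+\ud\Gamma-\int_\Gamma(\sigma_{23}^++\sigma_{23}^-)\varphi_2^+\ud\Gamma+\int_\Gamma(\sigma_{33}^--\sigma_{33}^+)\varphi_3^+\ud\Gamma.
\end{equation*}

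The three traces $\varphi_1^+$, $\varphi_2^+$, $\varphi_3^+$ on $\Gamma$ can be chosen independently (one simply extends any prescribed boundary data into $B_R\setminus\Gamma$ respecting \eqref{bcphi}), so taking $\bm\varphi$ compactly supported away from $\Gamma$ first gives $\pt_j\sigma_{ij}=0$ in $\mathbb{R}^3\setminus\Gamma$. Substituting the constitutive relation $\sigma_{ij}=2G\eps_{ij}+\tfrac{2\nu G}{1-2\nu}\eps_{kk}\delta_{ij}$ and using $\pt_j\eps_{ij}=\tfrac12(\Delta u_i+\pt_i(\nabla\cdot\mathbf u))$ turns this into $G\Delta u_i+\tfrac{G}{1-2\nu}\pt_i(\nabla\cdot\mathbf u)=0$, which after multiplying by $(1-2\nu)/G$ is precisely the L\'ame system in \eqref{maineq}. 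With the bulk term gone, varying $\varphi_1^+$ alone (with $\varphi_2^+=\varphi_3^+=0$) yields the first interface condition $\sigma_{13}^++\sigma_{13}^-=\pt_1 W$, and analogously for $\varphi_2^+$ and $\varphi_3^+$, producing the remaining two boundary equations in \eqref{maineq}.

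The only delicate point, and what I would call the main obstacle, is bookkeeping of the outward normals and of the sign constraints \eqref{bcphi} between $\bm\varphi^+$ and $\bm\varphi^-$: it is the antisymmetry of $\varphi_1,\varphi_2$ together with the symmetry of $\varphi_3$ across $\Gamma$ that makes the shear components add while the normal component subtracts, producing the two distinct forms $\sigma_{i3}^++\sigma_{i3}^-$ versus $\sigma_{33}^+-\sigma_{33}^-$. Everything else is standard first-variation calculation; the fact that $W\in C_b^{2,\alpha}$ guarantees that the misfit contribution is smooth enough to differentiate in $t$ under the integral over the compact region $B_R\cap\Gamma$.
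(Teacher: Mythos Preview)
Your proof is correct and follows essentially the same route as the paper's: compute the first variation of the perturbed energy, use the symmetry $\sigma_u:\eps_\varphi=\eps_u:\sigma_\varphi$, integrate by parts in each half space, and exploit the sign relations \eqref{bcphi} to split the boundary terms into the sum-form for $\sigma_{13},\sigma_{23}$ and the difference-form for $\sigma_{33}$. The paper makes explicit the step you leave implicit (considering both $\pm\bm\varphi$ to turn the inequality $\het\ge 0$ into a vanishing first variation), but otherwise the argument, including the normal conventions and the reduction $\nabla\cdot\sigma=0\Rightarrow(1-2\nu)\Delta\mathbf u+\nabla(\nabla\cdot\mathbf u)=0$, is identical.
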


\subsection{Dirichlet to Neumann map and the reduced nonlocal problem}\label{sec2.2}
In this section, we first take the strategy which reduces the 3D vector-field full system to a  nonlocal system in $\bR^2$ using the Dirichlet to Neumann map. Then  we will focus on the case that misfit potential $W$ depends only on the shear jump of {the} first component of the displacement field, which allows us to further reduce the problem to a scalar nonlocal Ginzburg-Landau  equation in $\bR$.
\subsubsection{ Reduction of the 3D full system to a nonlocal 2D system}
First we give the following Dirichlet to Neumann  map such that the vector-field displacement $\mathbf u$ can be expressed by the Dirichlet values of $u_1, u_2$ on $\Gamma$. The proof of this lemma is standard and will be given in Appendix \ref{appB}.
\begin{lem}[Dirichlet to Neumann  map] \label{D2N}
Assume {that} $\mathbf u$ is the solution to \eqref{maineq} such that the Dirichlet value of $u_1, u_2$ on $\Gamma$ are in $\dot{H}^s$ for some $s\geq \frac12$. Then solution $\mathbf u$ can be determined uniquely by  $u_1|_{\Gamma}, u_2|_\Gamma$. Particularly,  $\sigma_{13}(x_1,x_2, 0^+)$ and $\sigma_{23}(x_1,x_2,0^+)$ can be expressed in the Fourier space
 \begin{equation}\label{curve}
\left(\begin{array}{c}
\hat{\sigma}_{13}(k)\\
\hat{\sigma}_{23}(k)
\end{array}\right) = -A\left(\begin{array}{c}
\hat{u}_{1}(k)\\
\hat{u}_{2}(k)
\end{array}\right) := 2G
\left(\begin{array}{c}
{\textstyle -\left(\frac{k^2_2}{|k|}+\frac{1}{(1-\nu)}\frac{k^2_1}{|k|}\right)\hat{u}_1(k)
    -\frac{\nu }{(1-\nu)}\frac{k_1k_2}{|k|}\hat{u}_2(k)}\\
{\textstyle -\frac{\nu }{(1-\nu)}\frac{k_1k_2}{|k|}\hat{u}_1(k)
    -\left(\frac{k^2_1}{|k|}+\frac{1}{(1-\nu)}\frac{k^2_2}{|k|}\right)\hat{u}_2(k)}
\end{array}\right),
 \end{equation}
where $k=(k_1,k_2)$ is the frequency vector,  $|k|=\sqrt{k^2_1+k^2_2}$, and $\nu\in[-1,\frac12]$ is Poisson's ratio.
\end{lem}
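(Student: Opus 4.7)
The plan is to exploit translation invariance in $(x_1,x_2)$ by taking the Fourier transform, reducing the Lam\'e system \eqref{maineq} to a family of linear ODE boundary-value problems in $x_3$ parametrized by the frequency $k=(k_1,k_2)$. I work in the upper half space $\{x_3>0\}$, determine $\mathbf u$ there uniquely from the two Dirichlet data $\hat u_1(k),\hat u_2(k)$ together with the matching condition $\hat\sigma_{33}(k,0^+)=0$, and then extend to the lower half space by the reflection prescribed by \eqref{BC}.

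First observe that taking the divergence of $(1-2\nu)\Delta\mathbf u+\nabla(\nabla\cdot\mathbf u)=0$ gives $2(1-\nu)\Delta(\nabla\cdot\mathbf u)=0$, so the dilatation $\theta:=\nabla\cdot\mathbf u$ is harmonic on each half space. In Fourier variables, its bounded extension to $x_3>0$ is $\hat\theta(k,x_3)=C(k)\,e^{-|k|x_3}$ for some coefficient $C(k)$. Each $\hat u_i(k,x_3)$ then satisfies $(\partial_{33}-|k|^2)\hat u_i$ equal to an explicit multiple of $\hat\theta$, and the bounded general solution takes the form
\begin{equation*}
\hat u_i(k,x_3)=A_i(k)\,e^{-|k|x_3}+D_i(k)\,x_3\,e^{-|k|x_3},
\end{equation*}
where, using $(\partial_{33}-|k|^2)(x_3 e^{-|k|x_3})=-2|k|\,e^{-|k|x_3}$, one finds $D_1=\tfrac{ik_1 C}{2|k|(1-2\nu)}$, $D_2=\tfrac{ik_2 C}{2|k|(1-2\nu)}$, and $D_3=-\tfrac{C}{2(1-2\nu)}$.

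Four linear conditions then pin down $A_1,A_2,A_3,C$: the two Dirichlet data give $A_1=\hat u_1(k)$ and $A_2=\hat u_2(k)$; the consistency relation $\hat\theta(k,0^+)=C$ combined with $ik_1\hat u_1+ik_2\hat u_2+\partial_3\hat u_3|_{0^+}=C$ produces a linear equation between $C$ and $A_3$; and the condition $\hat\sigma_{33}(k,0^+)=0$ closes the system. This last identity comes from the standard Peierls-Nabarro symmetry (with $u_1,u_2$ odd and $u_3$ even in $x_3$, consistent with \eqref{bcphi}), under which $\sigma_{33}$ is odd across $\Gamma$, so that $\sigma_{33}^+=\sigma_{33}^-$ forces the normal stress to vanish on $\Gamma$. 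A direct calculation gives $\hat\sigma_{33}(k,0^+)=-2G|k|A_3-GC$, hence $A_3=-\tfrac{C}{2|k|}$, and back-substitution yields
\begin{equation*}
C(k)=\frac{1-2\nu}{1-\nu}\bigl(ik_1\hat u_1(k)+ik_2\hat u_2(k)\bigr).
\end{equation*}
At this point the solution on $\{x_3>0\}$ is uniquely determined by $\hat u_1,\hat u_2$; the reflection across $\Gamma$ gives the lower half space solution, and the regularity assumption $\hat u_j\in\dot H^s$ with $s\geq\tfrac12$ ensures that all boundary traces and surface integrals involved are well defined.

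Finally, to obtain \eqref{curve} I compute $\hat\sigma_{13}(k,0^+)=G(\partial_3\hat u_1+ik_1\hat u_3)|_{0^+}=G(-|k|A_1+D_1)+Gik_1 A_3$, substitute $D_1,A_3$, and the expression for $C$, and simplify using the algebraic identity $(1-\nu)|k|^2+\nu k_1^2=k_1^2+(1-\nu)k_2^2$. Since $\sigma_{13}$ is even in $x_3$ under the same parity, $\hat\sigma_{13}^++\hat\sigma_{13}^-=2\hat\sigma_{13}(k,0^+)$ matches the first row of \eqref{curve}; the second row follows by the same computation with $1\leftrightarrow 2$. The main difficulty is purely computational: tracking the two distinct factors $1-2\nu$ and $1-\nu$ that appear after eliminating $A_3$ and $C$, and recognising the algebraic identity that collapses the numerator into the compact Fourier symbol in the claim.
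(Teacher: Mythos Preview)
Your proof is correct and proceeds by a genuinely different route from the paper. In Appendix~\ref{appB} the authors Fourier transform in $(x_1,x_2)$, then eliminate $\hat u_2$ and $\hat u_3$ algebraically to reach the fourth-order ODE $\partial_3^4\hat u_i-2|k|^2\partial_3^2\hat u_i+|k|^4\hat u_i=0$ for each component, write the decaying general solution as $(A^\pm\mp B^\pm|k|x_3)e^{\mp|k|x_3}$ in the two half spaces, and match twelve constants via the reflection symmetry \eqref{BC} together with $\sigma_{33}^+=\sigma_{33}^-$. You instead exploit the classical structural fact that the dilatation $\theta=\nabla\cdot\mathbf u$ is harmonic for the Lam\'e system; this collapses the problem to a second-order inhomogeneous ODE for each $\hat u_i$ driven by a single amplitude $C(k)$, after which the unknowns $A_3$ and $C$ are fixed by two linear conditions (the self-consistency of $\theta$ and $\hat\sigma_{33}(k,0^+)=0$). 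The payoff of your approach is conceptual economy: three scalar parameters instead of twelve, and an explicit closed form $C=\tfrac{1-2\nu}{1-\nu}(ik_1\hat u_1+ik_2\hat u_2)$ that makes the anisotropy in \eqref{curve} transparent. The paper's direct elimination avoids introducing the auxiliary quantity $\theta$ but pays for it in bookkeeping.

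One minor remark: your last sentence identifies $\hat\sigma_{13}^++\hat\sigma_{13}^-=2\hat\sigma_{13}(k,0^+)$ with the first row of \eqref{curve}, whereas the lemma is stated for $\hat\sigma_{13}(k,0^+)$ itself. Your parity argument (with $u_3$ even, as in \eqref{bcphi} and as the paper's own proof uses via $C^+=C^-$) is exactly what is needed, and the apparent factor of two is the passage from \eqref{curve} to the operator $\mathcal A$ acting on the jump in \eqref{eq2D}; this is a normalization convention, not a defect in your reasoning.
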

Without loss of generality, from now on, we  set shear modulus $G$ to be $\frac12$ and use the notation $u_1=u_1^+(x_1,x_2, 0^+),\, u_3=u_3^+(x_1,x_2, 0^+),\,\, (x_1,x_2)\in \Gamma,$ for simplicity.

From the Dirichlet to Neumann  map in Lemma \ref{D2N}, the nonlinearity is decoupled and we obtain a 2D nonlocal system
\begin{equation}\label{eq2D}
\left(\begin{array}{c}
-\sigma_{13}^+ -\sigma_{13}^-\\
-\sigma_{23}^+ -\sigma_{23}^-
\end{array}\right)=:\mathcal{A} \left( \begin{array}{c}
u_1\\
u_2
\end{array} \right)=  \left( \begin{array}{c}
-\pt_{1}W(u_1, u_2)\\
-\pt_2 W(u_1, u_2)
\end{array} \right)\,\, \text{ on } \Gamma,
\end{equation}
where the nonlocal operator $\mathcal{A}$ is expressed below in \eqref{2dopt}. For straight dislocation, we refer to \cite{GLLX19} for {details about} the equivalence between the full system  and  reduced 1D equation in terms of both solutions and energies. Below, we formally derive the equivalence of the energies for the full system \eqref{maineq} and the reduced problem \eqref{eq2D}.

Recall that
\begin{equation*}
{A} = \left(\begin{array}{cc}
 \frac{k^2_2}{|k|}+\frac{1}{(1-\nu)}\frac{k^2_1}{|k|}
    & \frac{\nu }{(1-\nu)}\frac{k_1k_2}{|k|}\\
 \frac{\nu }{(1-\nu)}\frac{k_1k_2}{|k|}
      & \frac{k^2_1}{|k|}+\frac{1}{(1-\nu)}\frac{k^2_2}{|k|}
\end{array}\right)=  \left(\begin{array}{cc}
|k|
    & 0\\
0
      & |k|
\end{array}\right) + \frac{\nu}{1-\nu}\left(\begin{array}{cc}
\frac{k^2_1}{|k|}
    &\frac{k_1k_2}{|k|}\\
 \frac{k_1k_2}{|k|}
      & \frac{k^2_2}{|k|}
\end{array}\right),
\end{equation*}
which is {positive definite} for the Poisson's ratio between $\nu\in(-1,\frac12)$.
For $x=(x_1,x_2), x'=(x_1',x_2')$, recall the Riesz potential in 2D is
\begin{equation*}
I_\alpha f (\mathbf{x}) := c \int_{\mathbb{R}^2} | x-x' |^{-2+\alpha} f(x') \ud x', \quad 0<\alpha<2
\end{equation*}
with the Fourier symbol $|k|^{-\alpha}$. Thus for $r:=\sqrt{x_1^2+x_2^2}=|x|$,
\begin{align*}
\mathcal{F}^{-1} (\frac{k_i k_j}{|k|} \hat{f}) = \pt_{ij} \frac{1}{r} * f, \quad i,j=1,2,
\end{align*}
where $\F$ means the Fourier transformation.
We can rewrite $\mathcal{A}$ as
\begin{equation}\label{2dopt}
\mathcal{A}\left( \begin{array}{c}
u_1\\
u_2
\end{array} \right) = \int_{\mathbb{R}^2} G(x- x') \left( \begin{array}{c}
u_1(x)-u_1(x')\\
u_2(x)-u_2(x')
\end{array} \right) \ud x',
\end{equation}
where
\begin{equation*}
G(x) := \frac{1}{r^3} \left[  \frac{1-2\nu}{1-\nu} \left(\begin{array}{cc}
1
    &0\\
 0
      & 1
\end{array}\right) +
 \frac{3\nu}{1-\nu}
\left(\begin{array}{cc}
\frac{x_1^2}{r^2}
    &\frac{x_1x_2}{r^2}\\
 \frac{x_1 x_2}{r^2}
      & \frac{x_2^2}{r^2}
\end{array}\right)
  \right]= \frac{1}{|x|^3} \left( \frac{1-2\nu}{1-\nu}I + \frac{3\nu}{1-\nu} \frac{x}{|x|} \otimes \frac{x}{|x|} \right).
\end{equation*}
For $\nu\in(-1,\frac12)$, since $A$ is  positive defined, from Plancherel's equality, we have
\begin{equation*}
c_2 \|(u_1, u_2)\|^2_{\dot{H}^{\frac12}(\Gamma)} \leq \int_{\mathbb{R}^2}(u_1, u_2)^T \mathcal{A}  \left( \begin{array}{c}
u_1\\
u_2
\end{array} \right) \ud x \leq {C_2} \|(u_1, u_2)\|^2_{\dot{H}^{\frac12}(\Gamma)}.
\end{equation*}
Rigorously, the inequality shall be understood in perturbed sense; see \cite{GLLX19}.
Denote the reduced energy on $\Gamma$ as
\begin{equation}\label{energy2d-o}
E_\Gamma:= \frac 1 2\int_{\mathbb{R}^2}(u_1, u_2)^T \mathcal{A}  \left( \begin{array}{c}
u_1\\
u_2
\end{array} \right) \ud x+ \int_{\mathbb{R}^2} W(u_1, u_2) \ud x.
\end{equation}
Similar to \eqref{perEtotal}, we define the perturbed elastic energy of $\mathbf u$ on $\Gamma$ with respect to  the perturbation $\bm\varphi\in C_c^\8(\bR^3; \bR^3) $ as
\begin{equation*}
\begin{aligned}
\hegg:=&  \frac 1 2\int_\Gamma (u_1+\varphi_1, u_2+ \varphi_2)^{T} \mathcal{A} \left( \begin{array}{c}
u_1+ \varphi_1\\
u_2+ \varphi_2
\end{array} \right) - (u_1, u_2)^T \mathcal{A}  \left( \begin{array}{c}
u_1\\
u_2
\end{array} \right)   \ud x
\end{aligned}
\end{equation*}
and the perturbed total energy on $\Gamma$ as
\begin{equation*}
\heg:= \hegg+\int_\Gamma W(u_1+\varphi_1, u_2+ \varphi_2)-W(u_1, u_2) \ud x.
\end{equation*}

One can check the straight solution uniform in $x_2$, i.e., $u_1(x_1,x_2)= \phi(x_1), \, u_2(x_1,x_2)=0$ is a solution to \eqref{eq2D}, where $ \phi(x_1)$ is the solution to the 1D problem
\begin{equation}\label{1d-old}
\begin{aligned}
(-\Delta)^{\frac12}\phi(x_1) &= -(1-\nu) W'(\phi(x_1)), \quad x_1\in\mathbb{R}\\
\lim_{x_1\to \pm\8}\phi(x_1)&=\pm 1.
\end{aligned}
\end{equation}
We refer to \cite{CS05, PSV13} for {the} existence and uniqueness to \eqref{1d-old}, which also proved {that} $\phi$ is bounded, increasing from $-1$ to $1$, and a local minimizer of the corresponding 1D energy. See also \cite{GL19} in which the authors proved that $\phi$ is the unique equilibrium of the corresponding 1D nonlocal dynamics Ginzburg-Landau equation.
However, {there might be other solutions} to \eqref{maineq}. In {the} next section, we will further reduce the nonlocal system to a 1D nonlocal equation for the case potential depending only on $[u_1]$ and {characterize} bounded stable solutions.

\subsubsection{Reduction of the 2D nonlocal system  to a 1D equation}
If the misfit potential $W$ depends only on one component of displacement jump, i.e., $W(u_1, u_2)=W(u_1)$, we can reduce the 2D system further to a scalar equation. Let us first clarify the assumption on the double well/periodic potential $W$:
\begin{equation}\label{potential}
\begin{aligned}
 &W \in C_b^{2, \alpha}(\mathbb{R}; \bR), \vspace{1ex} \\ &W(x)>W(\pm 1),\quad x \in \left(-1, 1 \right), \vspace{1ex} \\
 &  W''\left(\pm 1\right)>0.
 \end{aligned}
 \end{equation}
In the case {when} $W(u_1, u_2)=W(u_1)$, from \eqref{curve}, we
 represent $\hat{u}_2$ by $\hat{u}_1$, i.e.,
\begin{equation}\label{u13}
 \dfrac{\nu}{1-\nu}\dfrac{k_1k_2}{|k|}\hat{u}_1(k)+\left(\dfrac{k_1^2}{|k|}+\dfrac{1}{1-\nu}\dfrac{k_2^2}{|k|}\right)\hat{u}_2(k)=0,
\end{equation}
  which is equivalent to
  $$
  \hat{u}_2(k)=-\dfrac{\nu k_1k_2}{(1-\nu)k_1^2+k_2^2}\hat{u}_1(k).
  $$
Substituting this equality {in} the first component in \eqref{curve} yields
\begin{align}
    \hat{\sigma}_{13}(k)&=-\left[\left(\dfrac{k_2^2}{|k|}+\dfrac{1}{1-\nu}\dfrac{k_1^2}{|k|}\right)\hat{u}_1(k)+\dfrac{\nu}{1-\nu}\dfrac{k_1k_2}{|k|}\hat{u}_2(k)\right]\nonumber\\
    &=-\dfrac{|k|^3}{(1-\nu)k_1^2+k_2^2}\hat{u}_1(k)=\F( W'(u_1)).\label{original}
\end{align}
Therefore, the 2D system \eqref{eq2D} is reduced to a new 1D nonlocal equation
\begin{equation}\label{maineq1D}
\begin{aligned}
\mathcal{L} u_1(x_1, x_2) = -W'(u_1(x_1,x_2)),& \quad (x_1,x_2)\in\Gamma,\\
 \lim_{x_1\to\pm \8}u_1(x_1,x_2)=\pm 1,&\quad x_2\in \mathbb{R},
 \end{aligned}
\end{equation}
where the nonlocal operator $\mathcal{L}$ has the Fourier symbol
\begin{equation*}
\frac{|k|^3}{(1-\nu)k_1^2 + k_2^2}\in \big[\frac{|k|}{2}~, ~2 |k|\big],~ \quad \nu\in[-1, \frac12].
\end{equation*}
Later in Section \ref{sec3}, we will derive the integral formulation of the nonlocal operator $\cL$ and study the properties of its kernel.
Compared {to} \eqref{energy2d-o}, we  also have the corresponding (further) reduced energy $E_\Gamma^0$ on $\Gamma$
\begin{equation}\label{energy2d-n}
E_\Gamma^0:= \frac 1 2\int_{\bR^2} u_1 \cL u_1 \ud x + \int_{\bR^2} W(u_1) \ud x,
\end{equation}
which is equivalent to $E_\Gamma$ in \eqref{energy2d-o} in {the} perturbed or local sense; see detailed arguments for the perturbed sense in \cite{GLLX19}.
Notice that the nonlinearity is now {coupled} to only $u_1$ on $\Gamma$. The main goal is to study the existence, uniqueness, and the property of solution to \eqref{maineq1D}. If one can solve \eqref{maineq1D}, then by elastic extension introduced in \cite{GLLX19}, we obtain the vector-field solutions to the original 3D full system \eqref{maineq}.

Recall that the straight solution (uniform in $x_2$), i.e., $u_1(x_1,x_2)= \phi(x_1), \, u_2(x_1,x_2)=0$ is a also solution to \eqref{maineq1D}, where $ \phi(x_1)$ is the solution to the 1D problem \eqref{1d-old}.
For notation simplicity, from now on, we replace $u_1(x_1,x_2)$ with a scalar function $u(x): \bR^2 \to \bR$ in \eqref{maineq1D} and recast \eqref{maineq1D} to
\begin{equation}\label{1D-neq}
\begin{aligned}
\mathcal{L} u(x) = -W'(u(x)), \quad &x=(x_1,x_2)\in \bR^2,\\
 \lim_{x_1\to\pm \8}u(x_1,x_2)=\pm 1,\quad &x_2\in \mathbb{R}.
 \end{aligned}
\end{equation}
We will focus on the kernel representation of the operator $\cL$ in Section \ref{sec3} and then prove {that} the solution $u(x)$ to \eqref{1D-neq} must {have} a 1D profile in Section \ref{1Dprop}. As a consequence, we will {finally} prove {that} the straight dislocation is the {only} stable solution (up to a rotation and translations)  to the full system \eqref{maineq} in Theorem \ref{thm-1d},

\section{Positive and anisotropic kernel of $\mathcal{L}$}\label{sec3}
In this section, we derive the integral formulation  of the operator $\cL$ in the Schwartz   space $\cS(\bR^2)$  and prove certain properties of its singular kernel.  We will use this integral formulation for $\mathcal{L}$ whenever the singular integration make sense, for instance, on {the} space $\{u\in \dot{H}^s(\bR^2) \, \text{ for any } s\geq 1\}$. In the remaining part of this paper, $C$ is a generic  constant whose value may change from line to line.

Recall the integral formulation of the half Laplacian $\Lambda:=(-\Delta)^{\frac12}$ on $ \cS(\bR^2)$
\begin{equation*}
\Lambda u  = -\frac{C_d}{2} \int_{\bR^2} \big( u(x+y)+ u(x-y)-2u(x)\big)|y|^{-3} \ud y,
\end{equation*}
where $$
C_d:=\frac{2}{\pi}\frac{\Gamma(\frac32)}{|\Gamma(-\frac12)|}=\frac{1}{2\pi}.
$$
First we state a lemma for the solution to an elliptic equation, whose proof will be given later.
\begin{lem}\label{lem_kernel}
Let $\beta:=1-\nu\in [\frac12,2]$. The elliptic equation
\begin{equation}\label{ellip}
\Delta P(x_1,x_2) = \frac{1}{(\beta x_1^2 + x_2^2)^{\frac52}}, \quad (x_1,x_2)\in\bR^2\backslash\{0\}
\end{equation}
 has a solution $$
 P(x_1,x_2)= \frac{v(\theta)}{(x_1^2+ x_2^2)^{\frac32}},
 $$ where $\theta=\arctan\frac{x_2}{x_1}$ and $v(\theta)$ is the unique $\pi$-periodic solution to
\begin{equation}\label{vODE}
v''+9v=(\beta \cos^2 \theta + \sin^2 \theta)^{-\frac{5}{2}}.
\end{equation}
Moreover, we have the following properties of $v(\theta)$
\begin{enumerate}[(i)]
\item $v(\theta)$ is symmetric with respect to $\frac{\pi}{2}$;
\item For $\beta\geq 1$, $v(\theta)$ is increasing in $[0,\frac{\pi}{2}]$ and decreasing in $[\frac{\pi}{2}, \pi]$; while for $0< \beta< 1$, $v(\theta)$ is decreasing in $[0,\frac{\pi}{2}]$ and increasing in $[\frac{\pi}{2}, \pi]$;
\item For $\frac23<\beta<\frac32$, $v(\theta)$ is positive and $\frac{1}{9}c_\beta \leq v(\theta)\leq \frac19$ for $0\leq \theta \leq \pi$, where
    $$c_\beta:=\min\{\frac{3\beta-2}{\beta^2}, \frac{3-2\beta}{\beta^{\frac32}}\}>0.$$
\end{enumerate}
\end{lem}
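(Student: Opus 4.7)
My plan is to reduce \eqref{ellip} to a periodic ODE via a scaling ansatz, and then extract the three properties of $v$ from the structure of that ODE. Working in polar coordinates $(r,\theta)$ with $r=|x|$ and $\theta=\arctan(x_2/x_1)$, I would substitute $P=v(\theta)/r^3$ into $\Delta=\partial_r^2+r^{-1}\partial_r+r^{-2}\partial_\theta^2$: the radial part contributes $9r^{-5}v$ and the angular part contributes $r^{-5}v''$, so $\Delta P=r^{-5}(v''+9v)$. Since $\beta x_1^2+x_2^2=r^2(\beta\cos^2\theta+\sin^2\theta)$, \eqref{ellip} is equivalent to $v''+9v=f(\theta)$ with $f(\theta):=(\beta\cos^2\theta+\sin^2\theta)^{-5/2}$. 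Existence and uniqueness of the $\pi$-periodic solution then follow by Fourier expansion: $f$ is smooth, even, and $\pi$-periodic, so it expands as $f=\sum_{k\ge 0}a_k\cos(2k\theta)$; the symbol of $\tfrac{d^2}{d\theta^2}+9$ on the $k$-th even cosine mode is $9-4k^2$, which is nonzero for every integer $k$, so term-by-term inversion gives the unique $\pi$-periodic solution $v=\sum_k(9-4k^2)^{-1}a_k\cos(2k\theta)$.

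Property (i) then follows from the identity $f(\pi-\theta)=f(\theta)$: the function $\tilde v(\theta):=v(\pi-\theta)$ solves the same $\pi$-periodic ODE, and uniqueness forces $\tilde v=v$. Evenness of $f$ about $0$ (equivalently, the combination of (i) and $\pi$-periodicity) then gives $v'(0)=v'(\pi/2)=0$. For (ii), I would differentiate the ODE once and set $w:=v'$: this yields $w''+9w=f'$ on $[0,\pi/2]$ with Dirichlet boundary conditions $w(0)=w(\pi/2)=0$. Using $\beta\cos^2\theta+\sin^2\theta=\beta-(\beta-1)\sin^2\theta$, the sign of $f'$ on $(0,\pi/2)$ is determined by the sign of $\beta-1$. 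I would then write down the explicit Dirichlet Green's function for $\tfrac{d^2}{d\theta^2}+9$ on $[0,\pi/2]$ in terms of $\sin(3\theta)$ and $\cos(3\theta)$ and perform a careful sign analysis to conclude the sign of $w$, and hence the claimed monotonicity of $v$.

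For (iii), the upper bound is a maximum-principle observation: at an interior maximum $\theta_0$ of $v$ one has $v''(\theta_0)\le 0$, so $v(\theta_0)\le f(\theta_0)/9$, which is bounded by $1/9$ in the appropriate subcase of $\beta$. For the sharp lower bound $v\ge c_\beta/9$ I would use the explicit $\pi$-periodic Green's function $G(\psi)=\tfrac{1}{6}\sin(3\psi)$ on $[0,\pi]$ (extended $\pi$-periodically) to write $v(\theta)=\int_0^\pi G(\theta-\phi)f(\phi)\,\ud\phi$. Splitting $G$ into its positive and negative parts and exploiting the pointwise structure of $f$ near the two symmetry points $\theta=0$ and $\theta=\pi/2$ should produce the two competing expressions $(3\beta-2)/\beta^2$ and $(3-2\beta)/\beta^{3/2}$ that appear in $c_\beta$. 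The main obstacle here is precisely that $G$ is not sign-definite, so standard sub/supersolution arguments fail to yield a pointwise quantitative lower bound; one is forced to combine the explicit Green's function with the symmetry and monotonicity from (i) and (ii) to reduce the estimate to the two critical angles.
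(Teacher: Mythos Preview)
Your reduction to the ODE, the Fourier-series argument for uniqueness of the $\pi$-periodic solution, and the derivation of (i) via uniqueness are all correct and in fact tidier than the paper's route (variation of parameters followed by an explicit verification of $v(0)=v(\pi)$, $v'(0)=v'(\pi)$). The real gap is in (ii). You set $w=v'$ and propose to invert $w''+9w=f'$ on $[0,\pi/2]$ via the Dirichlet Green's function and then ``perform a careful sign analysis''. But $9$ lies strictly above the first Dirichlet eigenvalue $\lambda_1=4$ on $[0,\pi/2]$, so that Green's function is \emph{not} sign-definite (both $\sin 3\theta$ and $\cos 3\theta$ vanish inside the interval), and the sign of $w$ does not follow from the sign of $f'$ by any routine inspection; you supply no mechanism for this step. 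The paper's device is an eigenvalue comparison by contradiction: assuming (say for $\beta>1$, so $f'>0$) that $w>0$ on $(0,a)$ with a first interior zero $w(a)=0$, one tests $w''+9w=f'>0$ against $w$ on $(0,a)$ and uses $\lambda_1((0,a))=(\pi/a)^2$ to force $a>\pi/3$; the mirror argument from the right endpoint forces the last zero $b<\pi/6$, contradicting $a\le b$. This eigenvalue trick is the idea missing from your outline.

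For (iii) your plan is also weaker than needed. Once (ii) is established, the extrema of $v$ sit at $\theta=0$ and $\theta=\pi/2$, and the paper simply evaluates $v(0)=\tfrac{3-2\beta}{9\beta^{3/2}}$ and $v(\pi/2)=\tfrac{3\beta-2}{9\beta^2}$ in closed form by a trigonometric substitution in the integral representation; the positivity range $\tfrac23<\beta<\tfrac32$ and the constant $c_\beta$ then drop out immediately. Your maximum-principle argument for the upper bound gives only $v(\theta_0)\le f(\theta_0)/9$, which yields $1/9$ when $\beta\ge 1$ but not when $\beta<1$ (there the maximum is at $\theta=0$ and $f(0)=\beta^{-5/2}>1$), and your Green's-function estimation scheme for the lower bound is both vaguer and less sharp than the exact endpoint computation.
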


In  Proposition \ref{prop_kernel1}, we derive the corresponding integral formulation for $\cL$ and then study the properties of the singular kernel in Proposition \ref{prop_kernel2}.
\begin{prop}\label{prop_kernel1}
The integral formulation of $\cL$ is given by
\begin{equation*}
\cL u=-\frac{1}{4\pi}\int_{\bR^2} \big(u(x+y)+u(x-y)-2u(x)\big)K(y)\,\ud y,
\end{equation*}
where $K(y):=9P(y_1/\sqrt\beta,y_2)$ satisfies
\begin{equation}
                        \label{eq3.13}
(\beta \pt_1^2 + \pt_2^2 )K(y) = \frac{9}{|y|^5},\quad\forall y\in\bR^2\backslash\{0\}.
\end{equation}
\end{prop}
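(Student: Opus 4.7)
The first assertion, equation \eqref{eq3.13}, is immediate from the chain rule applied to $K(y) = 9P(y_1/\sqrt\beta, y_2)$: setting $z = (y_1/\sqrt\beta, y_2)$, one computes $\partial_1^2 K(y) = (9/\beta)(\partial_1^2 P)(z)$ and $\partial_2^2 K(y) = 9\,(\partial_2^2 P)(z)$, so $(\beta\partial_1^2 + \partial_2^2) K(y) = 9\,\Delta P(z) = 9/|y|^5$ by Lemma \ref{lem_kernel} and the identity $\beta z_1^2 + z_2^2 = |y|^2$.

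For the integral representation, the plan is to match Fourier multipliers. For $u \in \mathcal{S}(\bR^2)$, the Taylor bound $u(x+y)+u(x-y)-2u(x) = O(|y|^2)$ paired with $K(y) = O(|y|^{-3})$ makes the proposed integral absolutely convergent, with integrand $O(|y|^{-1})$ near $0$ and $O(|y|^{-3})$ at infinity, both integrable in $\bR^2$. Taking the Fourier transform of the candidate operator gives the multiplier
\[
\Psi(k) := \frac{1}{2\pi}\int_{\bR^2}\bigl(1-\cos(k\cdot y)\bigr) K(y)\,\ud y,
\]
so the proposition reduces to the scalar identity $\Psi(k) = m(k)$, where $m(k) := |k|^3/(\beta k_1^2 + k_2^2)$ is the Fourier symbol of $\mathcal{L}$ recorded immediately after \eqref{maineq1D}.

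My primary route for $\Psi = m$ is distributional Fourier analysis of the elliptic equation \eqref{eq3.13}. The Riesz-potential formula, extended by analytic continuation in the exponent, gives $\widehat{|y|^{-5}}(k) = (2\pi/9)|k|^3$ as a tempered distribution in $\bR^2$. Transforming \eqref{eq3.13} then yields $-(\beta k_1^2 + k_2^2)\hat K(k) = 2\pi|k|^3$, that is, $\hat K(k) = -2\pi\, m(k)$ modulo distributions supported at the origin. Since $K$ is even (because $v$ is $\pi$-periodic) and $m$ is continuous with $m(0) = 0$, a cutoff-and-limit argument converts the formal identity $\int(1-\cos(k\cdot y))K(y)\,\ud y = \hat K(0) - \hat K(k)$ into the pointwise relation $\Psi(k) = m(k)$.

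The main obstacle is the distributional bookkeeping: both $K$ and $|y|^{-5}$ fail to be locally integrable in $\bR^2$, so the relation $\hat K = -2\pi m$ holds a priori only modulo derivatives of $\delta_0$, and one must use the degree-$1$ homogeneity of $m$ together with the evenness of $K$ to rule out such correction terms. A purely computational alternative would change variable to $z = (y_1/\sqrt\beta, y_2)$, switch to polar coordinates for $z$, and apply $\int_0^\infty (1-\cos(ar))/r^2\,\ud r = \pi|a|/2$ to reduce the identity to the angular equality
\[
\int_0^{2\pi} v(\theta)\,|\cos(\theta - \tilde\phi)|\,\ud\theta = \frac{4}{9\beta^2}\bigl(\cos^2\tilde\phi + \beta \sin^2\tilde\phi\bigr)^{3/2},
\]
which can be verified by matching Fourier series on $S^1$ using the ODE \eqref{vODE} to express the coefficients of $v$ in terms of the forcing and comparing with the known Fourier expansions of $|\cos\theta|$ and $(\cos^2\tilde\phi + \beta\sin^2\tilde\phi)^{3/2}$. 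Either route places the core technical difficulty in handling the non-integrability of the homogeneous kernel $K$.
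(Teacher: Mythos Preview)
Your proposal is correct in outline but takes a genuinely different route from the paper. The paper avoids the distributional Fourier analysis altogether by a factorization trick: writing $u=Lv$ with $L=-\beta\partial_1^2-\partial_2^2$, one has $\mathcal{L}u=\Lambda^3 v$ since the symbols satisfy $m(k)\cdot(\beta k_1^2+k_2^2)=|k|^3$. The integral formula for $\Lambda^3$ is derived first by Green's identity, subtracting a second-order Taylor polynomial so that all integrals are absolutely convergent; then the $L$ operator is moved from $v$ onto the kernel $K$ by a second application of Green's identity, using $L_yK(y)=-9|y|^{-5}$ from \eqref{eq3.13}. Every step works with Schwartz functions and absolutely convergent integrals, so no regularized distributions appear.

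Your approach computes the multiplier $\Psi(k)$ directly by Fourier-transforming the kernel. This is conceptually straightforward but, as you correctly flag, the bookkeeping for the homogeneous non-integrable functions $K$ and $|y|^{-5}$ is the crux: both must be interpreted as Hadamard finite parts, and the passage from the distributional identity $\widehat{K}=-2\pi m$ (modulo $\delta$-terms) to the pointwise statement $\Psi=m$ requires the cutoff-and-limit argument you mention but do not carry out. Your homogeneity/parity argument to kill the $\delta$-corrections is sound, so the route would succeed once those details are filled in. The paper's method buys you a cleaner, purely real-variable argument at the cost of the less obvious substitution $u=Lv$; your method is more systematic and would generalize more readily to other symbols, at the cost of heavier distribution theory.
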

\begin{proof}
Step 1. We {first} derive the integral formulation of $\Lambda^3$, where $\Lambda=(-\Delta)^{1/2}$. For any $u$ in the Schwartz class $\cS(\bR^2)$, we have
\begin{align*}
\Lambda^3 u(x)
&=\frac{1}{4\pi}\int_{\bR^2}\big(\Delta_x u(x+y)+\Delta_x u(x-y)-2\Delta_x u(x)\big)|y|^{-3}\,\ud y\\
&=\frac{1}{4\pi}\int_{\bR^2}\Delta_y\Big( u(x+y)+ u(x-y)-2u(x)-\sum_{i=1,2}y_i^2(\pt_i^2 u)(x)\Big)|y|^{-3}\,\ud y\\
&=\lim_{\varepsilon\to 0}\frac{1}{4\pi}\int_{B_\varepsilon^c}\Delta_y\Big( u(x+y)+ u(x-y)-2u(x)-\sum_{i=1,2}y_i^2(\pt_i^2 u)(x)\Big)|y|^{-3}\,\ud y\\
&=\lim_{\varepsilon\to 0}\left[\frac{9}{4\pi}\int_{B_\varepsilon^c}\Big( u(x+y)+ u(x-y)-2u(x)-\sum_{i=1,2}y_i^2(\pt_i^2 u)(x)\Big)|y|^{-5}\,\ud y+I_1\right],
\end{align*}
where we applied Green's identity in the last equality and $I_1$ is the boundary term. Since
$$
|u(x+y)+ u(x-y)-2u(x)-\sum_{i=1,2}y_i^2(\pt_i^2 u)(x)|\le c|y|^4,
$$
we have $I_1\sim O(\varepsilon)\to 0$ as $\varepsilon\to 0$. Therefore, we obtain
$$
\Lambda^3 u(x)=\frac{9}{4\pi}\int_{\bR^2}\big( u(x+y)+ u(x-y)-2u(x)-\sum_{i=1,2}y_i^2(\pt_i^2 u)(x)\big)|y|^{-5}\,\ud y.
$$

Step 2. We show that for any $u\in \cS$,
$$
\cF^{-1}\Big(\frac{|k|^3}{\beta k_1^2+k_2^2}\hat u\Big)
=-\frac{1}{4\pi}\int_{\bR^2} \big(u(x+y)+u(x-y)-2u(x)\big)K(y)\,dy,
$$
where $\beta=1-\nu$ and $K(y)$ satisfies \eqref{eq3.13}.

Recall that $P(y)$ is the solution to \eqref{ellip} we obtained in Lemma \ref{lem_kernel}, and thus $K(y)\sim |y|^{-3}$ is homogeneous of degree $-3$.
  By using a cutoff near the origin and the dominated convergence theorem, we may assume that $\hat u$ vanishes near the origin. Let $u=Lv$, where $v$ is also in $\cS$ and the second-order operator $L:=-\beta \pt_1^2-\pt_2^2$ has the symbol $\beta k_1^2+k_2^2$. In other words,
$$
\hat v=\frac{\hat u}{\beta k_1^2+k_2^2}.
$$
It  suffices to show that
\begin{equation}\label{ani_k}
\cF^{-1}(|k|^3 \hat{v})=\Lambda^3 v(x)=-\frac{1}{4\pi}\int_{\bR^2} \big(L_xv(x+y)+L_xv(x-y)-2L_xv(x)\big)K(y)\,\ud y.
\end{equation}
By using a similar computation, the right-hand side above is equal to
\begin{equation}\label{rhs_k}
\begin{aligned}
&-\frac{1}{4\pi}\int_{\bR^2} L_y\big(v(x+y)+v(x-y)-2v(x)-\sum_{i=1,2}y_i^2(\pt_i^2 v)(x)\big)K(y)\,\ud y\\
&=\lim_{\varepsilon\to 0}-\frac{1}{4\pi}\int_{B_\varepsilon^c} L_y\big(v(x+y)+v(x-y)-2v(x)-\sum_{i=1,2}y_i^2(\pt_i^2 v)(x)\big)K(y)\,\ud y\\
&=\lim_{\varepsilon\to 0}-\frac{1}{4\pi}\left[\int_{B_\varepsilon^c} \big(v(x+y)+v(x-y)-2v(x)-\sum_{i=1,2}y_i^2(\pt_i^2 v)(x)\big)L_y K(y)\,dy+I_2\right],
\end{aligned}
\end{equation}
where we applied Green's identity in the last equality and $I_2$ is the boundary term. As before, $I_2\sim O(\varepsilon)\to 0$ as $\varepsilon\to 0$. Because $L_y K(y)=-9|y|^{-5}$ for $y\neq 0$, the last limit in \eqref{rhs_k} is equal to
$$
\frac{9}{4\pi}\int_{\bR^2} \big(v(x+y)+v(x-y)-2v(x)-\sum_{i=1,2}y_i^2(\pt_i^2 v)(x)\big)|y|^{-5}\,\ud y=\Lambda^3 v(x),
$$
which yields \eqref{ani_k}.
\end{proof}

Combining Lemma \ref{lem_kernel} and Proposition \ref{prop_kernel1}, we obtain an  anisotropic kernel $K$.
Since $P(y)=\frac{1}{9}K(\sqrt{\beta}y_1, y_2)$ solves \eqref{ellip}, by {a change} of variables
\begin{equation*}
(\bar{x}_1, \bar{x}_2) = (\frac{1}{\sqrt{\beta}} x_1, x_2), \quad \bar{u}(\bar{x}_1, \bar{x}_2):= u(\sqrt{\beta}\bar{x}_1, \bar{x}_2),
\end{equation*}
we know {that} if $u(x_1, x_2)$ is {a} solution to \eqref{1D-neq}, then $\bar{u}(\bar{x}_1, \bar{x}_2)$ is a solution to
\begin{equation}\label{1D-maineq}
\begin{aligned}
\bar{\cL} \bar{u} = -\frac{1}{\sqrt{\beta}}W'(\bar{u}), \quad &\bar{x}\in \bR^2,\\
 \lim_{\bar{x}_1\to\pm \8}\bar{u}(\bar{x}_1, \bar{x}_2)=\pm 1,\quad &\bar{x}_2\in \mathbb{R},
 \end{aligned}
\end{equation}
where the nonlocal operator $\bar{\cL}$ is given by
\begin{equation}\label{bar-opt}
\begin{aligned}
\bar{\cL} \bar{ u}=-\frac{1}{4\pi}\int_{\bR^2} \big(\bar{u}(\bar{x}+\bar{y})+\bar{u}(\bar{x}-\bar{y})-2\bar{u}(\bar{x})\big)\bar{K}(\bar{y})\,\ud \bar{y}, \quad \bar{K}(\bar{y}):=\frac{9v(\theta)}{|\bar{y}|^{3}},
\end{aligned}
\end{equation}
with $v(\theta)=v(\arctan\frac{\bar{y}_2}{\bar{y}_1})$ being the solution to \eqref{vODE}.  In {the} next section, we will focus on the analysis of the solution to \eqref{1D-maineq} and drop the bar in \eqref{1D-maineq}.
 Now we summarize the properties  of the kernel $\bar{K}$ below.
\begin{prop}\label{prop_kernel2}
For $\frac23<\beta<\frac32$, the kernel $\bar{K}$ of $\bar{\cL}$ in \eqref{bar-opt} is positive and satisfies the following
properties
\begin{enumerate}[(i)]
\item $\bar{K}(-x)=\bar{K}(x), \quad \bar{K}(a x) = a^{-3} \bar{K}(x) \text{ for any }a>0$;
\item $0<\frac{c_\beta}{|x|^3}\leq \bar{K}(x)\leq \frac{1}{|x|^3};$
\item $ \max\{|x||\pt_e \bar{K}|, |x|^2|\pt_{ee}\bar{K}|\}\leq \frac{C}{|x|^3}$
\end{enumerate}
for {any} $x\in\bR^2\backslash\{0\}$ and {unit vector} $e\in S^{1}$, where $c_\beta$ {is} defined in Lemma \ref{lem_kernel} (iii).
\end{prop}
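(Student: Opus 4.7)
The plan is to read off all three items directly from the representation $\bar K(y)=9v(\theta)/|y|^3$ with $\theta=\arctan(y_2/y_1)$, combined with the qualitative information on $v$ supplied by Lemma~\ref{lem_kernel}.

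Items (i) and (ii) are essentially free. For (i), under $y\mapsto -y$ the angle shifts by $\pi$ while $|y|$ is unchanged; since Lemma~\ref{lem_kernel} produces $v$ as a $\pi$-periodic function, $\bar K(-y)=\bar K(y)$ follows at once. Homogeneity is equally easy: $\theta(ay)=\theta(y)$ and $|ay|=a|y|$ for $a>0$, so $\bar K(ay)=a^{-3}\bar K(y)$. For (ii), the two-sided estimate $c_\beta/9\le v(\theta)\le 1/9$ from Lemma~\ref{lem_kernel}(iii), valid for $\tfrac{2}{3}<\beta<\tfrac{3}{2}$, yields exactly
\[
\frac{c_\beta}{|y|^3}\le \bar K(y)\le \frac{1}{|y|^3}.
\]

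The core of the proposition is (iii). Since $\bar K$ is homogeneous of degree $-3$ and smooth on $\bR^2\setminus\{0\}$, its first and second derivatives are automatically homogeneous of degrees $-4$ and $-5$. Thus (iii) reduces to showing uniform bounds for $v'$ and $v''$ on $[0,\pi]$. Boundedness of $v''$ I would extract directly from the ODE \eqref{vODE}: the right-hand side $(\beta\cos^2\theta+\sin^2\theta)^{-5/2}$ is bounded because $\beta\cos^2\theta+\sin^2\theta\ge \min(1,\beta)>0$, and $v$ itself is bounded by (ii). Once $\|v''\|_\infty\le C$ is secured, the bound on $v'$ is forced by $\pi$-periodicity: $\int_0^\pi v'\,\ud\theta=0$ produces some $\theta_0\in[0,\pi]$ with $v'(\theta_0)=0$, and integrating $v''$ then gives $|v'(\theta)|\le \pi\|v''\|_\infty$. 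Inserting these into
\[
\pt_{x_i}\bar K(x)=\frac{9\,v'(\theta)\,\pt_{x_i}\theta}{|x|^3}-\frac{27\,v(\theta)\,x_i}{|x|^5},
\]
and using $|\nabla\theta|=1/|x|$ together with one more differentiation, one obtains $|\pt_e\bar K|\le C|x|^{-4}$ and $|\pt_{ee}\bar K|\le C|x|^{-5}$ for every unit vector $e\in S^1$, which is precisely (iii).

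No step here is genuinely difficult; everything reduces to consequences of Lemma~\ref{lem_kernel}. The one spot where some care is required is the pointwise bound on $v''$, which relies on the lower bound $\beta\cos^2\theta+\sin^2\theta\ge\min(1,\beta)$ of the forcing together with the $L^\infty$ estimate on $v$ from (ii) in order to control each term in the ODE separately; once this is done, the rest is routine homogeneity bookkeeping.
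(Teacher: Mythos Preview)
Your proposal is correct and matches the paper's approach: the paper states this proposition as a summary of properties that follow directly from Lemma~\ref{lem_kernel} and the explicit formula $\bar K(y)=9v(\theta)/|y|^3$, without writing out a separate proof. Your argument fills in exactly the details the paper leaves implicit---(i) and (ii) are immediate from $\pi$-periodicity and the bounds in Lemma~\ref{lem_kernel}(iii), and (iii) follows from homogeneity once $v,v',v''$ are bounded, which you extract cleanly from the ODE \eqref{vODE} and periodicity.
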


\begin{cor} (Strict positivity property at global minima and global maxima)\label{pp}
For any function $g(\mathbf{w})\in C(\bR^2)$, let $\mathbf{w}_m=(x_m,y_m),\,  \mathbf{w}_M=(x_M, y_M) \in \bR^2$ be the points at which $g(\mathbf{w})$ attains it global minimum and maximum respectively. Then we have
\begin{equation*}
\bar{\ptf} g(\mathbf{w})|_{\mathbf{w}=\mathbf{w}_m}<0, \quad \bar{\ptf} g(\mathbf{w})|_{\mathbf{w}=\mathbf{w}_M}>0
\end{equation*}
provided $g(\mathbf{w})$ is not a constant.
\end{cor}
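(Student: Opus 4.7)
The plan is to read off the corollary directly from the integral representation \eqref{bar-opt} together with the strict positivity of $\bar{K}$ from Proposition \ref{prop_kernel2}(ii). At a global minimum point $\mathbf{w}_m$, the pointwise inequalities $g(\mathbf{w}_m + y) \geq g(\mathbf{w}_m)$ and $g(\mathbf{w}_m - y) \geq g(\mathbf{w}_m)$ hold for every $y \in \bR^2$, so the symmetrized increment
\[
  \Delta_y g(\mathbf{w}_m) := g(\mathbf{w}_m + y) + g(\mathbf{w}_m - y) - 2 g(\mathbf{w}_m)
\]
is nonnegative. Since $\bar{K} > 0$ by Proposition \ref{prop_kernel2}(ii), the integrand in \eqref{bar-opt} is nonnegative, whence $\bar{\ptf} g(\mathbf{w}_m) \leq 0$ (as an element of $[-\infty, 0]$, the integral being well defined thanks to its definite sign).

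To upgrade this to a strict inequality I would use the non-constancy of $g$. Pick $\mathbf{w}_0$ with $g(\mathbf{w}_0) > g(\mathbf{w}_m)$, set $y_0 := \mathbf{w}_0 - \mathbf{w}_m \neq 0$, and use continuity of $g$ to find $\delta \in (0, |y_0|/2)$ and $\eta > 0$ such that $g(\mathbf{w}_m + y) \geq g(\mathbf{w}_m) + \eta$ on $B_\delta(y_0)$. On this ball one has $\Delta_y g(\mathbf{w}_m) \geq \eta$, and by Proposition \ref{prop_kernel2}(ii) we have $\bar{K}(y) \geq c_\beta (|y_0| + \delta)^{-3} > 0$ uniformly. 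Integrating over $B_\delta(y_0)$ produces a strictly positive contribution to the integral in \eqref{bar-opt}, so $\bar{\ptf} g(\mathbf{w}_m) < 0$.

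The case of the global maximum $\mathbf{w}_M$ is completely symmetric: $\Delta_y g(\mathbf{w}_M) \leq 0$ for every $y$, it is strictly negative on a neighborhood of some $y_0' \neq 0$ constructed from a point where $g$ is strictly less than $g(\mathbf{w}_M)$, and the same argument produces $\bar{\ptf} g(\mathbf{w}_M) > 0$. There is no serious obstacle here; the only cautionary point is that for merely continuous $g$ the singular integral near the origin need not converge a priori, but at an extremum the integrand has a definite sign, so the integral is a well-defined extended real number and the sign-based argument goes through without any additional regularity assumption on $g$.
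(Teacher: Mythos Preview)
Your argument is correct and follows the same approach as the paper: both use the positivity of $\bar K$ from Proposition~\ref{prop_kernel2} to conclude that the integrand in \eqref{bar-opt} is sign-definite at a global extremum, with strictness coming from non-constancy of $g$. Your version is more explicit about extracting the strict inequality via a continuity argument on a small ball, whereas the paper simply asserts that equality forces $g$ to be constant; the substance is the same.
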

\begin{proof}
From the positivity of the kernel $\bar{K}$ in Proposition \ref{prop_kernel2}, since $g(\mathbf{w}_m)\leq g(\mathbf{w})$ for all $\mathbf{w}\in\mathbb{R}\times \mathbb{T}$,  we have
$$\bar{\ptf} g(\mathbf{w})|_{\mathbf{w}=\mathbf{w}_m} \leq 0$$
and the equality holds if and only if $g(\mathbf{w})\equiv g(\mathbf{w}_m)$ for all $\mathbf{w}\in\mathbb{R}\times \mathbb{T}$. The proof for $\bar{\ptf}g$ at $\mathbf{w}_M$ is {the} same.
\end{proof}

We finish this section by proving Lemma \ref{lem_kernel}.
\begin{proof}[Proof of Lemma \ref{lem_kernel}]
Step 1. To solve
\begin{equation}\label{laplace2}
\Delta {P}(x_1,x_2) = \frac{1}{(\beta x_1^2+ x_2^2)^{\frac52}} , \quad (x_1,x_2)\in\bR^2\backslash\{0\},
\end{equation}
by a change of variables
\begin{equation}\label{variables}
x_1=r \cos \theta,\,\, x_2= r \sin \theta, \quad {P}(x,y)= r^{-3} v(\theta)
\end{equation}
in \eqref{laplace2}, we have the ODE for $v(\theta)$ \eqref{vODE}, i.e.,
\begin{equation*}
v''+ 9 v = (\beta \cos^2 \theta + \sin^2 \theta)^{-\frac52},
\end{equation*}
where $\beta\in[\frac12,2]$.
Notice  that the natural period for the harmonic oscillation $v''+9v=0$ is $\frac{2\pi}{3}$ while the force term
\begin{align*}
f(\theta):=(\beta \cos^2 \theta + \sin^2 \theta)^{-\frac52}= [\frac{\beta+1}{2}+\frac{\beta-1}{2}\cos(2\theta)]^{-\frac52}
\end{align*}
has period $\pi$.
 Therefore, we always has a $2\pi$-periodic solution to the ODE   \eqref{vODE}. Besides, from elementary calculations, one can check that for $\beta\leq 1$,
 $$f_{\min}=f(\frac{\pi}{2}+k\pi), \quad f_{\max}=f(k\pi), \quad k\in\mathbb{Z},$$
while for $\beta\geq 1$,
 $$f_{\max}=f(\frac{\pi}{2}+k\pi), \quad f_{\min}=f(k\pi), \quad k\in\mathbb{Z}.$$

Step 2. Existence and uniqueness of a $\pi$-periodic solution.

First, we know {that} $P$ satisfies $P(-x, -y)=P(x,y)$, which, together with \eqref{variables}, yields the periodicity $v(\theta+\pi)=v(\theta)$. Therefore, we seek {a} periodic solution to \eqref{vODE} with period $\pi$.

Second, by {the} method of variation of parameters, one can solve a special solution $v_0(\theta)$
\begin{equation}\label{special}
\begin{aligned}
v_0(\theta)=&u_1(\theta)\cos(3\theta)+u_2(\theta)\sin(3\theta) \\&\text{ with } u_1(\theta)=-\frac13\int_0^{\theta} \sin (3 x) f(x) \ud x,\, u_2(\theta)=\frac13 \int_0^{\theta} \cos (3 x)f(x) \ud x
\end{aligned}
\end{equation}
 and thus the general solution  to \eqref{vODE} is given by
\begin{equation}\label{v_sol}
v(\theta)= C_1 \cos(3\theta)+ C_2 \sin(3\theta) + v_0(\theta).
\end{equation}
Notice that for any $\pi$-periodic function $v(\theta)$, we have
\begin{equation}\label{pi}
\int_{-\pi}^{\pi} v(\theta) \cos(k\theta) \ud \theta = 0,\quad \int_{-\pi}^{\pi} v(\theta) \sin(k\theta) \ud \theta = 0 \quad \text{ for any odd integer }k.
\end{equation}
Therefore, to obtain a $\pi$-periodic solution,
we must set
\begin{equation}\label{c12}
C_1 := -\frac{1}{\pi}\int_{-\pi}^{\pi} v_0(\theta) \cos(3\theta) \ud \theta, \quad C_2 :=- \frac{1}{\pi}\int_{-\pi}^{\pi} v_0(\theta) \sin(3\theta) \ud \theta.
\end{equation}

Third, we check $v(0)=v(\pi)$ and $v'(0)=v'(\pi)$.

By plugging in, we have
\begin{equation*}
v(0)=v_0(0)+C_1= -\frac{1}{\pi}\int_{-\pi}^{\pi} v_0(\theta) \cos(3\theta) \ud \theta
\end{equation*}
and
\begin{equation*}
v(\pi)=v_0(\pi)-C_1=\frac13\int_0^{\pi} \sin(3x) f(x) \ud x +\frac{1}{\pi}\int_{-\pi}^{\pi} v_0(\theta) \cos(3\theta) \ud \theta.
\end{equation*}
From \eqref{special} we know {that} $u_1(\theta)$, $u_2(\theta)$, and thus $v_0(\theta)$ are all periodic {functions} with period $2\pi$. Hence by integration by parts, we have
\begin{align*}
\int_{-\pi}^{\pi} v_0(\theta) \cos(3\theta) \ud \theta =& -\frac13 \int_{-\pi}^\pi v_0'(\theta) \sin 3\theta \ud \theta\\
=& -\frac13 \int_{-\pi}^\pi [u_1(\theta)(-3\sin 3\theta )+u_2(\theta)(3\cos 3\theta)]\sin(3\theta) \ud \theta\\
=&\int_{-\pi}^\pi u_1(\theta) \sin^2(3\theta) - u_2(\theta) \cos 3\theta \sin 3\theta \ud \theta.
\end{align*}
Since $f(x)$ has period $\pi$ and \eqref{pi}, one can check
\begin{equation}\label{u1_46}
\begin{aligned}
\int_{-\pi}^\pi u_1(\theta) \sin^2(3\theta) \ud \theta =&-\frac13 (\frac{\theta}{2}-\frac{\sin 6\theta}{12}) \int_0^\theta (\sin 3x)f(x) \ud x \Big|_{-\pi}^\pi+\frac16 \int_{-\pi}^{\pi}\theta \sin 3\theta f(\theta) \ud \theta\\
=&-\frac{\pi}{6} \int_0^\pi (\sin 3\theta) f(\theta) \ud \theta,
\end{aligned}
\end{equation}
where we used
$$
\int_{-\pi}^\pi \theta \sin 3\theta f(\theta) \ud \theta = \pi \int_0^\pi \sin 3\theta f(\theta) \ud \theta.
$$
Similarly, we obtain
\begin{equation*}
\int_{\pi}^\pi u_2(\theta) \cos 3\theta \sin 3\theta \ud \theta= 0.
\end{equation*}
Therefore, we conclude that
\begin{equation}\label{cc1}
\frac{1}{\pi}\int_{-\pi}^{\pi} v_0(\theta) \cos(3\theta) \ud \theta= -\frac16\int_0^\pi (\sin 3\theta)f(\theta) \ud \theta,
\end{equation}
which yields
\begin{equation*}
v(0)= \frac16\int_0^\pi (\sin 3x)f(x) \ud x = v(\pi).
\end{equation*}

Then by plugging in, we have
\begin{equation*}
v'(0)=v_0'(0)+3C_2= -\frac{3}{\pi}\int_{-\pi}^{\pi} v_0(\theta) \sin(3\theta) \ud \theta
\end{equation*}
and
\begin{equation*}
v'(\pi)=v_0'(\pi)-3C_2=-\int_0^{\pi} \cos(3\theta) f(\theta) \ud \theta +\frac{3}{\pi}\int_{-\pi}^{\pi} v_0(\theta) \sin(3\theta) \ud \theta.
\end{equation*}
By the similar calculation in \eqref{u1_46}, we have
\begin{equation}\label{cc2}
\int_{-\pi}^{\pi} v_0(\theta) \sin(3\theta) \ud \theta= \int_{-\pi}^{\pi} u_2(\theta) \cos^2(3\theta) \ud \theta = \frac{\pi}{6} \int_0^\pi \cos(3\theta)f(\theta) \ud \theta.
\end{equation}
Therefore, we verified $v'(0)=v'(\pi)$. Thus from the uniqueness of the solution to ODE \eqref{vODE} we conclude that \eqref{v_sol} with coefficients in \eqref{c12} is the unique $\pi$-periodic solution to \eqref{vODE}. From \eqref{cc1} and \eqref{cc2}, we have
\begin{equation*}
\begin{aligned}
v(\theta)=& \frac16 \left( 2\int_0^\theta \cos 3x f(x) \ud x - \int_0^\pi \cos 3x f(x) \ud x \right) \sin 3\theta\\
&+ \frac16 \left( -2\int_0^\theta \sin 3x f(x) \ud x + \int_0^\pi \sin 3x f(x) \ud x \right) \cos 3\theta\\
=& \frac16\left( \int_0^\theta \cos 3x f(x) \ud x - \int_\theta^\pi \cos 3x f(x) \ud x  \right) \sin 3\theta\\
&-  \frac16\left( \int_0^\theta \sin 3x f(x) \ud x - \int_\theta^\pi \sin 3x f(x) \ud x  \right) \cos 3\theta
\end{aligned}
\end{equation*}
and
\begin{equation*}
\begin{aligned}
v'(\theta)=&\frac{\cos 3\theta}{2} \left( \int_0^\theta \cos 3x f(x) \ud x -\int_\theta^\pi \cos 3x f(x) \ud x \right)\\
 &+ \frac{\sin 3\theta}{2} \left( \int_0^\theta \sin 3x f(x) \ud x -\int_\theta^\pi \sin 3x f(x) \ud x \right)
\end{aligned}
\end{equation*}
for $0\leq \theta \leq \pi.$

Step 3. Properties of $v(\theta)$ and the range of {$\beta$} such that $v$ is positive.

Denote $g_1(x):=(\cos 3 x) f(x)$ and $g_2(x):=(\sin 3 x) f(x)$, which have the symmetric property
\begin{equation*}
g_1(\frac{\pi}{2}+ x) = - g_1(\frac{\pi}{2}-x), \quad g_2(\frac{\pi}{2}+ x)=g_2(\frac{\pi}{2}-x).
\end{equation*}
Therefore, for $0\leq \theta<\pi$, we have
\begin{equation*}
\left( \int_{0}^\theta - \int_{\theta}^\pi \right) g_1(x) \ud x = 2 \int_0^\theta g_1(x) \ud x, \quad \left( \int_{0}^\theta - \int_{\theta}^\pi \right) g_2(x) \ud x = -2\int_{\theta}^{\frac{\pi}{2}} g_2(x) \ud x
\end{equation*}
and thus $v(\theta)$ and $v'(\theta)$ can be expressed as
\begin{align*}
v(\theta)= \frac{\sin 3 \theta}{3} \int_0^\theta g_1(x) \ud x + \frac{\cos 3 \theta}{3} \int_{\theta}^{\frac{\pi}{2}} g_2(x) \ud x, \quad 0\leq \theta < \pi,\\
v'(\theta)=\cos 3\theta \int_0^{\theta} g_1(x) \ud x - {\sin 3\theta}\int_{\theta}^{\frac{\pi}{2}} g_2(x) \ud x,\quad 0\leq \theta < \pi.
\end{align*}
Moreover, we have
\begin{equation}\label{sym-d1}
v(\theta)=v(\pi-\theta),\quad v'(\theta)=-v'(\pi-\theta).
\end{equation}
Now we give the following claim:\\
\textit{ For $0\leq \theta<\pi$, the equation $v'(\theta)=0$ only has two roots $\theta=0,\pi$.}

\begin{proof}
Indeed, we only need to prove this claim for $\beta>1$. For the case $0<\beta<1$, denote $\bar{\beta}:=\frac{1}{\beta}$, then by $v'(\frac{\pi}{2}-\theta,\beta)=\bar{\beta}^{\frac52}v'(\theta,\bar{\beta})$, the problem is reduced to the case $\beta>1.$

Denote $w(\theta):=v'(\theta)$. Then $w$ satisfies $w''+9w=f'(\theta)$.
For $\beta>1$, we know {that} $f'(\theta)>0$ in $(0,\pi/2)$. By the symmetric property for $v'(\theta)$ in \eqref{sym-d1}, it remains to prove that the solution to
\begin{equation}\label{BVP}
w''+9w=f'(\theta)>0, \quad w(0)=w(\frac{\pi}{2})
\end{equation}
is strictly positive for $0<\theta<\frac{\pi}{2}$.  If it is not true, then there exist $0<a\leq b<\frac{\pi}{2}$ such that $w(a)=w(b)=0$ and $w(\theta)>0$ for $\theta\in(0,a)\cup (b,\frac{\pi}{2})$. Notice that the eigenvalue problem
\begin{equation*}
w''+\lambda w=0, \quad w(0)=w(a)=0
\end{equation*}
has the smallest eigenvalue $\lambda_1=\left( \frac{\pi}{a} \right)^2$. If $a\leq \frac{\pi}{3}$, then $\lambda_1\geq 9$. However, this is impossible because $w(\theta)>0$ for $\theta\in(0,a)$ and
\eqref{BVP} implies
\begin{equation*}
\int_{0}^a w(w''+\lambda_1 w) \ud \theta\ge
\int_{0}^a w(w''+9 w) \ud \theta = \int_0^a f'(\theta) w \ud \theta>0.
\end{equation*}
Thus we conclude that $a>\frac{\pi}{3}$. Similarly, since the eigenvalue problem
\begin{equation*}
w''+\lambda w=0, \quad w(b)=w(\frac{\pi}{2})=0
\end{equation*}
has the smallest eigenvalue $\lambda_1=\left( \frac{\pi}{\frac{\pi}{2}-b} \right)^2 $, we conclude that $b<\frac{\pi}{6}$. This gives a contradiction and we complete the proof.
\end{proof}

Then by elementary calculations, we have
\begin{equation*}
\begin{aligned}
v(0) =& \frac{1}{3} \int_0^{\frac{\pi}{2}} \frac{\sin 3x}{(\beta \cos^2 x + \sin^2 x)^{\frac52}} \ud x= -\frac13 \int_0^1 \frac{1-4t^2}{(1+(\beta-1)t^2)^{\frac52}}\ud t\\
\overset{t=\frac{\tan y}{\sqrt{\beta-1}}}{=}& -\frac13 \int_0^{\arctan \sqrt{\beta-1}} \frac{1}{\sqrt{\beta-1}} (1-\frac{4}{\beta-1} \tan ^2 y) \frac{1}{\sec^3 y}\ud y\\
\overset{s=\sin y}{=}& -\frac13 \int_0^{\frac{\sqrt{\beta-1}}{\sqrt{\beta}}} \frac{1}{\sqrt{\beta-1}}(1-\frac{\beta+3}{\beta-1} s^2) \ud s
= \frac{3-2\beta}{9\beta^{\frac32}}.
\end{aligned}
\end{equation*}
Similarly, we have
\begin{equation*}
v(\frac{\pi}{2}) = -\frac{1}{3} \int_0^{\frac{\pi}{2}} \frac{\cos 3x}{(\beta \cos^2 x + \sin^2 x)^{\frac52}} \ud x= -\frac13 \int_0^1 \frac{1-4t^2}{(\beta+(1-\beta)t^2)^{\frac52}}\ud t= \frac{3\beta-2}{9\beta^2}.
\end{equation*}

On one hand, if $\frac12\leq \beta\leq 1$,
$$  \frac{3\beta-2}{9\beta^2}=v(\frac{\pi}{2})\leq v(\theta)\leq v(0)=\frac{3-2\beta}{9\beta^{\frac32}} \quad \text{ for any } 0\leq \theta\leq \pi.$$
In this case, $v(\frac{\pi}{2})=0$ if and only if $\beta=\frac23$ and thus
\begin{equation*}
v_{\min}= \frac{3\beta-2}{9\beta^2}>0, \quad v_{\max}=\frac{3-2\beta}{9\beta^{\frac32}}\leq \frac{1}{9} \quad \text{ for }\frac23<\beta\leq 1.
\end{equation*}
On the other hand, if $1\leq \beta\leq 2$,
$$  \frac{3\beta-2}{9\beta^2}=v(\frac{\pi}{2})\geq v(\theta)\geq v(0)=\frac{3-2\beta}{9\beta^{\frac32}} \quad \text{ for any } 0\leq \theta\leq \pi.$$
In this case, $v(0)=0$ if and only if $\beta=\frac32$
and thus
\begin{equation*}
v_{\min}= \frac{3-2\beta}{9\beta^{\frac32}}>0, \quad v_{\max}=\frac{3\beta-2}{9\beta^2}\leq \frac{1}{9} \quad \text{ for }1\leq\beta<\frac32.
\end{equation*}
Therefore, we conclude that when $\frac23<\beta<\frac32$, there exists a unique $\pi$-periodic positive solution $v(\theta)$ to \eqref{vODE}.
\end{proof}

\section{Bounded stable solution has a 1D profile}\label{1Dprop}

In this section, we will prove {that any} bounded stable solution to \eqref{1D-maineq}, dropping bars for notation simplicity, {has} a 1D profile, i.e., $u(x)=\phi(e\cdot x)$ for some $e\in S^1$, where $\phi$  is the unique (up to translations)  solution to a 1D problem; see Theorem \ref{thm-1d}. From  \cite{CS05, PSV13}, {we know that} $\phi$ is bounded, increasing from $-1$ to $1$, and a local minimizer of the corresponding energy. The proof relies on the local BV estimates originally developed by \cite{CSV19} to study the {quantitative}  flatness of nonlocal minimal surface. Their method does not use any extension {argument} and thus is particularly powerful for the nonlocal problem with general anisotropic kernel. This is the key in our case {as} we do not have a scalar-valued extended 3D problem. In this section, we will always assume $\frac23<\beta<\frac32$ so that we have good properties of the kernel $\bar{K}$ in Proposition \ref{prop_kernel2}.

The proof of the 1D profile is divided into the following three subsections, which roughly say that stability implies flatness; c.f. \cite{dipierro2016nonlocal, Gui19, FS20, CSV19, savin2018rigidity}.
First, let us clarify the definition of stable solutions and how to define the  perturbations to {these} stable solutions in a ball $B_R$ with respect to some direction $\cnu$.
Define the total energy of $u$ in any ball $B_{R}\subset\bR^2$ as
\begin{equation}\label{B_energy}
\begin{aligned}
E_\Gamma^0(u; B_R):
&=\frac{C_d}{4}\iint_{\bR^2\times \bR^2\backslash B_R^c\times B_R^c} |u(x)-u(y)|^2\bar{K}(x-y) \ud x \ud y + \frac{1}{\sqrt{\beta}}\int_{B_R} W(u(x)) \ud x\\
&=:\frac{C_d}{4}\cE(u;B_R)+ F(u;B_R),
\end{aligned}
\end{equation}
where $\bar{K}$ is the kernel in \eqref{bar-opt} satisfying {the} properties in Proposition \ref{prop_kernel2}.
Here the nonlocal energy $\cE(u;B_R)$ can be viewed as the contribution in $B_R$ of the semi-norm $\|\cdot\|_{\dot{H}^\frac12(\bR^2)}$ because we formally have
$$
\|u\|_{\dot{H}^\frac12}^2=\lim_{R\to +\8} \cE(u;B_R).
$$
\begin{defn}\label{def_stable}
We say that $u$ is a stable solution to \eqref{1D-maineq} if the second  local variation  of  $E_\Gamma^0$ {defined in \eqref{energy2d-n}} is nonnegative, i.e.,
\begin{equation*}
\int_{\bR^2} \left(\bar{\cL} v + \frac{1}{\sqrt{\beta}}W''(u)v \right) v \ud x \geq 0 \quad \text{for any }v\in C_c^2(\bR^2).
\end{equation*}
\end{defn}
Next, following \cite{CSV19} we define the perturbations to {these} stable solutions in a ball $B_R$ with respect to some direction $\cnu$. Let $R\geq 1$, define the  perturbed coordinates along $\cnu\in S^1$ direction as
\begin{equation*}
\psi_{t,\cnu}(z):= z+ t \varphi(z) \cnu,
\end{equation*}
where $\varphi$ is a cut-off function compact supported in $B_R$
\begin{equation*}
\varphi(z)= \left\{\begin{array}{cc}
1, \quad & |z|\leq \frac{R}{2},\\
2-\frac{2|z|}{R},\quad & \frac{R}{2} \leq |z|\leq R,\\
0, \quad  &|z|\geq R.
\end{array}\right.
\end{equation*}
Since for $t$ small enough, $\psi_{t, \cnu}$ is invertible, the local perturbed solution is defined by the pushforward operator
\begin{equation*}
P_{t, \cnu}u(x)= u(\psi^{-1}_{t,\cnu}(x)).
\end{equation*}
Based on the local perturbed solutions above, we define the discrete second variation of $E_\Gamma^0(u,B_R)$ as
\begin{equation*}
\Delta_{\cnu\cnu}^t E_\Gamma^0(u,B_R):= E_\Gamma^0(P_{t, \cnu}u, B_R) + E_\Gamma^0(P_{-t, \cnu}u, B_R) -2E_\Gamma^0(u, B_R).
\end{equation*}

\subsection{Interior BV estimate}
The interior BV estimate follows the spirit of \cite{CSV19}, which gives {a} quantitative flatness estimate in $B_1$ for a stable set in $B_R$. Let us first give the estimate of the discrete second variation of the energy $\Delta_{\cnu\cnu}^t E_\Gamma^0(u,B_R)$ in Lemma \ref{Dis-second} and  an identity for the nonlocal energy $\cE$ in Lemma \ref{identity}.
\begin{lem}\label{Dis-second}
Let $\bar{K}$ be the kernel in \eqref{B_energy} satisfying properties in Proposition \ref{prop_kernel2}. Then the discrete second variation of the energy $\Delta_{\cnu\cnu}^t E_\Gamma^0(u,B_R)$ satisfies the estimate
\begin{equation*}
\Delta_{\cnu\cnu}^t E_\Gamma^0(u,B_R)\leq C\frac{t^2}{R^2} \cE(u,B_R) \quad \text{for any }R\geq 1,
\end{equation*}
where $C$ is a constant.
\end{lem}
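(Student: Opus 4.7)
The plan is to decompose the energy as $E_\Gamma^0(u;B_R)=\frac{C_d}{4}\cE(u;B_R)+F(u;B_R)$ and handle each piece separately: show that $\Delta_{\cnu\cnu}^{t}F(u;B_R)=0$ exactly, and obtain a pointwise bound of the form $|H(a,b,t)|\le C t^{2}R^{-2}\bar{K}(a-b)$ for the integrand of $\Delta_{\cnu\cnu}^{t}\cE(u;B_R)$. Integrating the latter against $|u(a)-u(b)|^{2}$ over $\Omega:=(\bR^{2})^{2}\setminus(B_R^{c})^{2}$ then yields the required estimate.

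For the potential term, the change of variables $z=\psi_{t,\cnu}^{-1}(x)$ gives
\begin{equation*}
F(P_{t,\cnu}u;B_R)=\frac{1}{\sqrt{\beta}}\int_{B_R}W(u(z))\,J_{t}(z)\,\ud z,
\end{equation*}
where $J_{t}(z):=\det\bigl(I+t\,\cnu\otimes\nabla\varphi(z)\bigr)$ and the domain is preserved because $\psi_{t,\cnu}$ is the identity outside $B_R$ and a diffeomorphism of $\bR^{2}$ for $|t|$ small. Since $\cnu\otimes\nabla\varphi$ is a $2\times 2$ matrix of rank at most one, its determinant vanishes, so $J_{t}(z)=1+t\,\cnu\cdot\nabla\varphi(z)$ exactly, with no $t^{2}$ term. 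Therefore $J_{t}+J_{-t}\equiv 2$ and $\Delta_{\cnu\cnu}^{t}F(u;B_R)=0$.

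For the nonlocal term, applying the same change of variables in both arguments of the double integral (which preserves $\Omega$ since $\psi_{t,\cnu}$ is the identity on $B_R^c$) reduces the problem to estimating
\begin{equation*}
H(a,b,t):=J_{t}(a)J_{t}(b)\bar{K}(z+tw)+J_{-t}(a)J_{-t}(b)\bar{K}(z-tw)-2\bar{K}(z),
\end{equation*}
with $z:=a-b$ and $w:=(\varphi(a)-\varphi(b))\,\cnu$. I would Taylor-expand $h(t):=J_{t}(a)J_{t}(b)\bar{K}(z+tw)$ to second order at $t=0$. Setting $a_{1}:=\cnu\cdot\nabla\varphi(a)$ and $b_{1}:=\cnu\cdot\nabla\varphi(b)$, expansion of $J_{t}(a)J_{t}(b)\pm J_{-t}(a)J_{-t}(b)$ reorganises $H$ into three groups of $t^{2}$-terms: one of type $w^{T}D^{2}\bar{K}\,w$, one of type $a_{1}b_{1}\bar{K}$, and one of type $(a_{1}+b_{1})\,w\cdot\nabla\bar{K}$. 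Using $\|\nabla\varphi\|_{\infty}\le C/R$ (which yields both $|a_{1}|,|b_{1}|\le C/R$ and the Lipschitz estimate $|w|\le C|z|/R$), combined with the kernel bounds $\bar{K}(z)\le|z|^{-3}$, $|\nabla\bar{K}(z)|\le C|z|^{-4}$, $|D^{2}\bar{K}(z)|\le C|z|^{-5}$, and the positivity $\bar{K}(z)\ge c_{\beta}|z|^{-3}$ from Proposition \ref{prop_kernel2}, each of these three terms is controlled by $CR^{-2}|z|^{-3}\le CR^{-2}\bar{K}(z)$, yielding the pointwise bound $|H(a,b,t)|\le Ct^{2}R^{-2}\bar{K}(z)$.

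The main technical obstacle is to guarantee that the Lagrange remainder, which involves $\bar{K}$, $\nabla\bar{K}$, and $D^{2}\bar{K}$ at intermediate points $z+\xi w$ with $|\xi|\le t$, is controlled by the kernel at $z$ itself. This reduces to $|z+\xi w|\ge|z|/2$, which follows from the Lipschitz bound $|w|\le C|z|/R$ once $t$ is small enough that $t\|\nabla\varphi\|_{\infty}\le 1/2$---a condition that is in any case needed for $\psi_{t,\cnu}$ to be a diffeomorphism. Once this uniform comparability $|z+\xi w|\sim|z|$ is secured, the singular-kernel estimates of Proposition \ref{prop_kernel2} apply uniformly, and integration of $|u(a)-u(b)|^{2}|H(a,b,t)|$ over $\Omega$ produces $|\Delta_{\cnu\cnu}^{t}\cE(u;B_R)|\le Ct^{2}R^{-2}\cE(u;B_R)$, which combined with the vanishing of the potential contribution completes the proof.
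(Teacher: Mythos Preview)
Your argument is correct. The paper does not give its own proof of this lemma; it simply cites \cite[Lemma~2.1]{CSV19} (and the analogous lemmas in \cite{FS20,Gui19}) and notes that the kernel properties (i)--(iii) of Proposition~\ref{prop_kernel2} are exactly the hypotheses needed there. Your proof is precisely the argument carried out in those references: change variables via $\psi_{t,\cnu}$ to reduce the second variation to the integrand $H(a,b,t)$, use that in dimension two the Jacobian $J_t=1+t\,\cnu\cdot\nabla\varphi$ is exactly linear in $t$ (so the potential contribution cancels), and then control the three $t^2$-terms pointwise using $|w|\le CR^{-1}|z|$, the derivative bounds on $\bar K$, and the lower bound $\bar K(z)\ge c_\beta|z|^{-3}$ to convert $|z|^{-3}$ back into $\bar K(z)$. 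The comparability $|z+\xi w|\sim|z|$ for $|t|\|\nabla\varphi\|_\infty\le 1/2$ is indeed the key point that makes the remainder estimates uniform, and is implicit in the standing assumption that $|t|$ is small enough for $\psi_{t,\cnu}$ to be a diffeomorphism.
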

The proof of this lemma is given by \cite[Lemma 2.1]{CSV19} (see also \cite[Lemma 2.1]{FS20} and \cite[Lemma 3.2]{Gui19}). {Recall} {that} $\bar{K}$ {satisfies} properties (i)-(iii) in Proposition \ref{prop_kernel2}.

Next, {we recall an} identity for nonlocal energy, which is originally introduced in \cite{PSV13} and crucially used in \cite{CSV19} for the interior BV estimate. Note that it does not depend on exact formulas of the kernel $\bar{K}$ as long as the integrals are well-defined. In the remaining context, $f_+(x):=\max\{f(x),0\}$ and $f_-(x):=-\min\{f(x),0\}$.
\begin{lem}\label{identity}
Let $u,v$ be any measurable functions such that $\cE(u,B_R)<\8$ and $\cE(v,B_R)<\8$.  Then we have
\begin{equation*}
\begin{aligned}
& \cE(u, B_R)+ \cE(v,B_R)\\
=&\cE(\min\{u,v\}, B_R)+ \cE(\max\{u,v\},B_R) + 2 \iint_{\bR^2\times \bR^2\backslash B_R^c\times B_R^c} (v-u)_+(x) (v-u)_-(y) \bar{K}(x-y) \ud x \ud y,
\end{aligned}
\end{equation*}
where $\bar{K}$ is the kernel associated with the nonlocal energy $\cE$.
\end{lem}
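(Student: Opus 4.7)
The plan is to reduce this integral identity to a pointwise algebraic identity, then integrate against the symmetric kernel $\bar K$ over the domain $D := \bR^2\times\bR^2 \setminus B_R^c\times B_R^c$. First, writing $m := \min\{u,v\}$ and $M := \max\{u,v\}$, I would establish the pointwise identity
\begin{align*}
&|u(x)-u(y)|^2 + |v(x)-v(y)|^2 - |m(x)-m(y)|^2 - |M(x)-M(y)|^2 \\
&\qquad= 2(v-u)_+(x)(v-u)_-(y) + 2(v-u)_+(y)(v-u)_-(x).
\end{align*}
This is verified by a four-way case split on the signs of $v(x)-u(x)$ and $v(y)-u(y)$: when the signs agree, the min/max assignments at both $x$ and $y$ coincide with those of $u$ and $v$, and both sides vanish; when the signs disagree, the identity $u+v=m+M$ gives $[(u(x)-u(y))+(v(x)-v(y))]^2 = [(m(x)-m(y))+(M(x)-M(y))]^2$, so the left-hand side reduces to $2[(m(x)-m(y))(M(x)-M(y))-(u(x)-u(y))(v(x)-v(y))]$, and expanding in each mixed case produces precisely one of the two cross terms on the right, the other vanishing because $(v-u)_+$ and $(v-u)_-$ have disjoint supports.

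The second step is to integrate the pointwise identity against $\bar{K}(x-y)$ over $D$. Since $\bar K$ is even (Proposition \ref{prop_kernel2}(i)) and $D$ is invariant under $x \leftrightarrow y$, the substitution $x \leftrightarrow y$ shows that the two cross-term integrals coincide, and combining them yields the single symmetric integral on the right-hand side of the lemma. To justify the interchange of summation and integration I would verify absolute convergence: the hypothesis $\cE(u,B_R), \cE(v,B_R)<\infty$ together with the pointwise bound $|m(x)-m(y)|,|M(x)-M(y)| \le |u(x)-u(y)| + |v(x)-v(y)|$ gives $\cE(m,B_R),\cE(M,B_R)<\infty$, and the cross-term integrand is nonnegative and controlled via Cauchy--Schwarz by $\cE(u-v,B_R) \le 2\cE(u,B_R) + 2\cE(v,B_R) < \infty$.

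The main obstacle here is only bookkeeping: tracking which of $(v-u)_\pm(x)$ and $(v-u)_\pm(y)$ are nonzero in each of the four cases of the pointwise identity. Once the pointwise identity is cleanly established, the passage to the integral identity is a routine application of Fubini's theorem and the evenness of the kernel; in particular, no specific features of the anisotropic kernel $\bar K$ beyond symmetry are used, which is precisely the reason this identity remains valid in the present anisotropic setting rather than being tied to the fractional Laplacian kernel of \cite{PSV13}.
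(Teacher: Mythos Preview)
The paper does not give its own proof of this lemma; it simply attributes the identity to \cite{PSV13} (noting its later use in \cite{CSV19}) and remarks that the argument is insensitive to the specific kernel. Your approach---reducing to a pointwise algebraic identity in the four numbers $u(x),u(y),v(x),v(y)$, verifying it by a case split on the signs of $(v-u)(x)$ and $(v-u)(y)$, and then integrating against $\bar K(x-y)$ over the symmetric domain $D$ using the evenness of $\bar K$---is precisely the standard argument from those references, and your integrability justification via the pointwise bound and Cauchy--Schwarz is the right one.

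One bookkeeping remark: your pointwise identity is correct, but after integrating and merging the two cross terms by the substitution $x\leftrightarrow y$, the coefficient in front of the single integral is $4$, not $2$; that is, the identity actually reads
\[
\cE(u,B_R)+\cE(v,B_R)=\cE(m,B_R)+\cE(M,B_R)+4\iint_{D}(v-u)_+(x)(v-u)_-(y)\,\bar K(x-y)\,\ud x\,\ud y.
\]
So your sentence ``combining them yields the single symmetric integral on the right-hand side of the lemma'' overshoots by a factor of $2$. This appears to be a slip in the lemma as stated in the paper rather than in your argument; it is harmless for the application in \eqref{bv-tm1}, where the identity is only used to bound $\cE(u_m,B_R)+\cE(u_M,B_R)$ plus a nonnegative cross term from above by $\cE(u,B_R)+\cE(P_{t,\cnu}u,B_R)$.
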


Now we are ready to give the interior BV estimate for stable {solutions} in Definition \ref{def_stable}. The proof is similar to \cite[Lemma 3.6]{Gui19} and \cite[Lemma 2.2]{FS20} due to the properties of the {kernel} $\bar{K}$ in Proposition \ref{prop_kernel2}. We include the proof for completeness.
\begin{lem}\label{BV}
Let $|u|\leq M$ be a bounded stable solution to \eqref{1D-maineq} satisfying Definition \ref{def_stable}. Then there exists a constant $C(\beta,M)$ depending only on $\beta$ and $M$ such that for any $R\geq 1$,
\begin{align}
\left( \int_{B_{\frac12}}(\pt_{\cnu} u(x))_+ \ud x \right)\left( \int_{B_{\frac12}}(\pt_{\cnu} u(y))_- \ud y \right)\leq C(\beta,M) \frac{\cE(u, B_R)}{R^2},\label{BV1}\\
\int_{B_{\frac12}} |\nabla u(x)| \ud x \leq C(\beta,M) (1+\sqrt{\cE(u,B_1)}).\label{BV2}
\end{align}
\end{lem}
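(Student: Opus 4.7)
The plan is to follow the CSV quantitative flatness argument \cite{CSV19} adapted to the PDE setting as in \cite{FS20, Gui19}, combining the three ingredients already at hand: the discrete second variation upper bound (Lemma \ref{Dis-second}), the nonlocal min-max identity (Lemma \ref{identity}), and the stability hypothesis (Definition \ref{def_stable}).

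First I would fix $\cnu \in S^1$ and small $t > 0$ and set $u_\pm := P_{\pm t,\cnu} u$, both of which coincide with $u$ outside $B_R$. Applying Lemma \ref{identity} to the pair $u_+, u_-$ on $B_R$ gives
\begin{equation*}
\cE(u_+,B_R) + \cE(u_-,B_R) - \cE(\min\{u_+,u_-\},B_R) - \cE(\max\{u_+,u_-\},B_R) = 2 J_t,
\end{equation*}
with $J_t := \iint_{\bR^2\times\bR^2\setminus B_R^c\times B_R^c} (u_+ - u_-)_+(x) (u_+ - u_-)_-(y) \bar K(x-y)\,\ud x\,\ud y \geq 0$. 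Because pointwise $W(\min(a,b)) + W(\max(a,b)) = W(a) + W(b)$, the misfit contributions cancel and one obtains
\begin{equation*}
\tfrac{C_d}{2} J_t = \Delta_{\cnu\cnu}^t E_\Gamma^0(u,B_R) - \bigl[E_\Gamma^0(\min,B_R) + E_\Gamma^0(\max,B_R) - 2 E_\Gamma^0(u,B_R)\bigr].
\end{equation*}
Lemma \ref{Dis-second} controls $\Delta_{\cnu\cnu}^t E_\Gamma^0(u,B_R) \leq C t^2 \cE(u,B_R)/R^2$. For the bracketed term, a Taylor expansion gives $\min - u = -t\,\varphi\,|\pt_\cnu u| + O(t^2)$ and $\max - u = t\,\varphi\,|\pt_\cnu u| + O(t^2)$ (using $\varphi \geq 0$); after smooth approximation of the Lipschitz perturbation $\varphi |\pt_\cnu u|$, the stability hypothesis (together with the vanishing of the first variation at the solution $u$) yields the lower bound $E_\Gamma^0(\min) + E_\Gamma^0(\max) - 2 E_\Gamma^0(u) \geq -o(t^2)$. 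Combining, $\tfrac{C_d}{2} J_t \leq C t^2 \cE(u,B_R)/R^2 + o(t^2)$. Expanding $u_+ - u_- = -2t\,\varphi\, \pt_\cnu u + O(t^2)$ one finds $J_t/t^2 \to 4\iint \varphi(x)(\pt_\cnu u)_+(x)\,\varphi(y)(\pt_\cnu u)_-(y)\,\bar K(x-y)\,\ud x\,\ud y$ as $t\to 0^+$; dividing by $t^2$ and passing to the limit,
\begin{equation*}
\iint \varphi(x)(\pt_\cnu u)_+(x)\,\varphi(y)(\pt_\cnu u)_-(y)\,\bar K(x-y)\,\ud x\,\ud y \leq \frac{C}{R^2}\cE(u,B_R).
\end{equation*}
Since $\varphi \equiv 1$ on $B_{R/2}\supset B_{1/2}$ for $R\geq 1$, and $\bar K(x-y) \geq c_\beta / |x-y|^3 \geq c_\beta$ for $x,y\in B_{1/2}$ by Proposition \ref{prop_kernel2}(ii), estimate \eqref{BV1} follows.

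For \eqref{BV2} I would use the bound $|u|\leq M$ and a Fubini/total-variation argument: along each line parallel to a coordinate axis intersected with $B_{1/2}$, $\int|\pt_i u|\,\ud s \leq \operatorname{TV}(u|_{\text{line}\cap B_{1/2}}) \leq 2M$; integrating in the orthogonal direction yields $\int_{B_{1/2}} |\pt_i u|\,\ud x \leq C(M)$ for $i=1,2$, so $\int_{B_{1/2}} |\nabla u|\,\ud x \leq C(M) \leq C(M)(1+\sqrt{\cE(u,B_1)})$. The hard part will be the stability step, i.e. the lower bound $E_\Gamma^0(\min)+E_\Gamma^0(\max)-2E_\Gamma^0(u) \geq -o(t^2)$: the perturbation $\varphi|\pt_\cnu u|$ is only Lipschitz, so one must regularize, quantify the smoothing error uniformly, and control the $O(t^3)$ Taylor remainder. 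This ultimately relies on interior regularity of $u$ extracted from the equation $\bar\cL u = -W'(u)/\sqrt\beta$ together with the kernel bounds of Proposition \ref{prop_kernel2}.
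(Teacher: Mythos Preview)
Your argument for \eqref{BV1} is essentially correct and is a minor variant of the paper's: the paper applies the min--max identity of Lemma~\ref{identity} to the pair $(u,\,P_{t,\cnu}u)$ and then uses \emph{three} competitors $u_m=\min\{P_{t,\cnu}u,u\}$, $u_M=\max\{P_{t,\cnu}u,u\}$, and $P_{-t,\cnu}u$ in the stability step, whereas you apply the identity to the pair $(P_{t,\cnu}u,\,P_{-t,\cnu}u)$ and use only two competitors $\min\{u_+,u_-\}$, $\max\{u_+,u_-\}$. Both routes lead to the same inequality after dividing by $t^2$ and using Proposition~\ref{prop_kernel2}(ii); neither fully resolves the regularity issue you flag at the end, and the paper, like you, defers this to the cited references.

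Your argument for \eqref{BV2}, however, contains a genuine error. Boundedness $|u|\le M$ does \emph{not} imply that the total variation of $u$ along a line segment is bounded by $2M$: think of $u(x_1,x_2)=M\sin(Nx_1)$, which is bounded by $M$ but has $\int_{-1/2}^{1/2}|\pt_1 u|\,\ud x_1\sim MN$. The bound $\operatorname{TV}\le 2M$ holds only for \emph{monotone} functions, which is precisely what the lemma is a step toward proving, so your argument is circular. The paper instead derives \eqref{BV2} from \eqref{BV1}: setting $A^\pm:=\int_{B_{1/2}}(\pt_{\cnu}u)_\pm\,\ud x$, inequality \eqref{BV1} with $R=1$ gives $\min\{A^+,A^-\}\le C(\beta)\sqrt{\cE(u,B_1)}$, while the divergence theorem and $|u|\le M$ give
\[
|A^+-A^-|=\Big|\int_{B_{1/2}}\pt_{\cnu}u\,\ud x\Big|=\Big|\int_{\pt B_{1/2}}u\,\cnu\cdot n\,\ud\sigma\Big|\le C(M).
\]
Then $\int_{B_{1/2}}|\pt_{\cnu}u|=A^++A^-=|A^+-A^-|+2\min\{A^+,A^-\}\le C(\beta,M)(1+\sqrt{\cE(u,B_1)})$, and summing over two coordinate directions yields \eqref{BV2}.
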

\begin{proof}
Step 1. Proof of \eqref{BV1}. Denote
$$u_M:= \max\{P_{t,\cnu} u, u\}\quad \text{and}\quad u_m:= \min\{P_{t, \cnu} u, u\}.
$$
Then by the identity in Lemma \ref{identity}, we have for $R\geq 1$,
\begin{equation}\label{bv-tm1}
\begin{aligned}
&\cE(u_m, B_R)+ \cE(u_M,B_R) + 2 \int_{B_{\frac12}}\int_{B_{\frac12}} (u(x-t\cnu)-u(x))_+ (u(y-t\cnu)-u(y))_- \bar{K}(x-y) \ud x \ud y\\
\leq & \cE(u, B_R)+ \cE(P_{t,\cnu} u,B_R),
\end{aligned}
\end{equation}
where we used $P_{t,\cnu} u(x) = u(x-t\cnu)$ for $x\in B_{\frac12}$ and $|t|$ small enough.
Moreover, for the local term $F$ in total energy, we always have
\begin{equation}\label{bv-tm2}
F(u_m, B_R)+ F(u_M,B_R) = F(P_{t, \cnu}(u), B_R)+ F(u, B_R).
\end{equation}
Since $|x-y|<1$ for $x,y\in B_{\frac12}$ and
$$
0<\frac{c_\beta}{|x-y|^3}\leq \bar{K}(x-y)
$$
from Proposition \ref{prop_kernel2}, \eqref{bv-tm1} and \eqref{bv-tm2} yield
\begin{equation*}
\begin{aligned}
&E_\Gamma^0(u_m, B_R)+ E_\Gamma^0(u_M,B_R) + C(\beta) \int_{B_{\frac12}}\int_{B_{\frac12}} (u(x-t\cnu)-u(x))_+ (u(y-t\cnu)-u(y))_-  \ud x \ud y\\
\leq & E_\Gamma^0(u, B_R)+ E_\Gamma^0(P_{t,\cnu} u,B_R).
\end{aligned}
\end{equation*}
Then by the stability of $u$ and  Lemma \ref{Dis-second}, we have
\begin{equation}\label{bv-tm4}
\begin{aligned}
&C(\beta) \int_{B_{\frac12}}\int_{B_{\frac12}} (u(x-t\cnu)-u(x))_+ (u(y-t\cnu)-u(y))_-  \ud x \ud y\\
\leq & \Delta_{\cnu\cnu}^t E_\Gamma^0(u, B_R)- \left[E_\Gamma^0(u_m, B_R)+E_\Gamma^0(u_M,B_R)+E_\Gamma^0(P_{-t,\cnu}u, B_R)-3E_\Gamma^0(u, B_R)\right]\\
\leq &\Delta_{\cnu\cnu}^t E_\Gamma^0(u, B_R) + o(t^2) \leq C(\beta)\frac{t^2}{R^2}\cE(u, B_R).
\end{aligned}
\end{equation}
Here in the second inequality, we used the fact that the second variation of $E^0_\Gamma$ is nonnegative, which implies
$$[E_\Gamma^0(u_m, B_R)-E_\Gamma^0(u, B_R)]+[E_\Gamma^0(u_M,B_R)-E_\Gamma^0(u, B_R)]+[E_\Gamma^0(P_{-t,\cnu}u, B_R)-E_\Gamma^0(u, B_R)]\geq -o(t^2).$$
By dividing $t^2$ in \eqref{bv-tm4} and taking $t\to 0$, we conclude \eqref{BV1}.

Step 2. Proof of \eqref{BV2}. Denote
$$
A^\pm:= \int_{B_{\frac12}} (\pt_{\cnu} u(x))_\pm \ud x.
$$
Then \eqref{BV1} gives
\begin{equation*}
\min\{A^+, A^-\} \leq \frac{C(\beta)}{R}\sqrt{\cE(u, B_R)}.
\end{equation*}
Thus we have
\begin{equation*}
\begin{aligned}
&\int_{B_{\frac12}}|\pt_{\cnu} u| \ud x
=& A^+ + A^- = |A^+-A^-|+ 2\min\{A^+, A^-\} \leq C(\beta,M)(1+\sqrt{\cE(u,B_1)}),
\end{aligned}
\end{equation*}
where we used
$$
|A^+ - A^-|=|\int_{B_{\frac12}} \pt_{\cnu} u(x) \ud x|\leq \int_{\pt B_{\frac12}} |u \cnu\cdot n_{\pt B_{\frac12}}|\leq C(M)
$$
due to boundedness of $u$. Therefore we obtain \eqref{BV2} since $|\nabla u|\leq |\pt_{1} u|+ |\pt_{2} u|.$
\end{proof}

\subsection{Energy estimates in any balls}
In this subsection, we will prove the energy estimate in any balls by {combining} the interior BV estimate in Lemma \ref{BV} and a sharp interpolation inequality for the nonlocal energy $\cE$ below.
\begin{lem}\label{interpolation}
Let $|u|\leq M$ be a bounded function. Assume that $u$ is Lipschitz in $B_2$ with $L_0:= \max\{2, \|\nabla u\|_{L^\8(B_2)}\}$. Then there exists a constant $C(M)$ depends only on $M$ such that
\begin{equation*}
\cE(u, B_1)\leq \iint_{\bR^2\times \bR^2\backslash B_1^c\times B_1^c} \frac{|u(x)-u(y)|^2}{|x-y|^3} \ud x \ud y \leq C(M) \log L_0 (1+\int_{B_2}|\nabla u| \ud x).
\end{equation*}
\end{lem}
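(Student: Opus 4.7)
The first inequality is immediate from Proposition \ref{prop_kernel2}(ii), which gives the pointwise bound $\bar K(x-y)\leq |x-y|^{-3}$. For the second inequality, my plan is to split the domain $\bR^2\times\bR^2\setminus B_1^c\times B_1^c$ into a near-field piece $D_{\text{near}}\subset B_2\times B_2$ (both points in $B_2$, at least one in $B_1$) and a far-field piece $D_{\text{far}}=(B_1\times B_2^c)\cup (B_2^c\times B_1)$. On $D_{\text{far}}$ one has $|x-y|\geq 1$ and $|u(x)-u(y)|\leq 2M$, so the contribution is bounded by $CM^2\int_{B_1}\int_{|z|\geq 1}|z|^{-3}\,\ud z\leq C(M)$; since $L_0\geq 2$ this term is absorbed into $C(M)\log L_0$ on the right-hand side.

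The bulk of the argument concerns $D_{\text{near}}$, where I would use that the segment from $x$ to $y$ lies in the convex ball $B_2$, on which $u$ is Lipschitz. The key idea is to write $|u(x)-u(y)|^2$ as the product of two complementary bounds:
\[
|u(x)-u(y)|\leq \min\{L_0|x-y|,\,2M\}\quad\text{and}\quad |u(x)-u(y)|\leq \int_0^{|x-y|}|\nabla u(x+s\omega)|\,\ud s,
\]
with $\omega=(y-x)/|y-x|$, which yields
\[
\frac{|u(x)-u(y)|^2}{|x-y|^3}\leq \frac{\min\{L_0 r,\,2M\}}{r^3}\int_0^r|\nabla u(x+s\omega)|\,\ud s.
\]
I would then pass to polar coordinates $y=x+r\omega$, $\ud y = r\,\ud r\,\ud\omega$, with $r\in[0,r_{\max}(x,\omega)]$ for some $r_{\max}\leq 3$, and split the $r$-integration at the threshold $r=1/L_0$. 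On the small-scale regime $r\leq 1/L_0$ the min equals $L_0 r$; a Fubini swap of the $s$- and $r$-integrations gives the inner weight $L_0\int_s^{1/L_0}\ud r/r=L_0\log(1/(L_0 s))$, and integrating this against $s\in[0,1/L_0]$ evaluates to $1/L_0$ by the substitution $t=L_0 s$, so after using the translation-invariant estimate $\int_{B_1}|\nabla u(x+s\omega)|\,\ud x\leq \int_{B_2}|\nabla u|$ this regime contributes a clean $C\int_{B_2}|\nabla u|$, with \emph{no} logarithm. On the large-scale regime $r\in[1/L_0,r_{\max}]$ the min is at most $2M$; the analogous Fubini swap produces the weight $\int_{\max(s,1/L_0)}^{r_{\max}}\ud r/r^2\leq 1/\max(s,1/L_0)$, whose integral over $s\in[0,3]$ is $1+\log(3L_0)\leq C\log L_0$, so this regime contributes $C(M)\log L_0\int_{B_2}|\nabla u|$.

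The main technical obstacle is accounting for the logarithm precisely, which requires both the product decomposition of $|u(x)-u(y)|^2$ and the threshold split at $r=1/L_0$. Squaring only the Lipschitz bound yields an integral growing like $L_0$ that cannot be traded for $\int|\nabla u|$, while squaring only the boundedness bound loses the gradient entirely; the product form converts one factor of $|u(x)-u(y)|$ into a line integral of $|\nabla u|$ while keeping the other as the sharper of $L_0|x-y|$ and $2M$. The threshold $r=1/L_0$ is exactly where these two sharper bounds cross, and isolating the two regimes makes the logarithm appear exactly once via $\int_{1/L_0}^{3}\ud r/r$. A subsidiary point, implicit in the polar setup, is that after the Fubini swap the $x$-integration is effectively over $\{x\in B_1:x+s\omega\in B_2\}$, so that $\nabla u(x+s\omega)$ is defined and the bound $\int|\nabla u(x+s\omega)|\,\ud x\leq\int_{B_2}|\nabla u|$ holds uniformly in $s\in[0,3]$ and $\omega\in S^1$.
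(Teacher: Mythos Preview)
The paper does not give its own proof here: it invokes \cite[Lemma~3.1]{FS20} for the second inequality with the isotropic kernel $|x-y|^{-3}$, and obtains the first inequality from the pointwise bound $\bar K(x)\le |x|^{-3}$ of Proposition~\ref{prop_kernel2}(ii), exactly as you do. Your proposal therefore supplies a direct argument where the paper only cites one.

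The argument is correct. The near/far split disposes of the tail by the crude bound $|u(x)-u(y)|\le 2M$ and $|x-y|\ge 1$; on the near piece, the product bound
\[
|u(x)-u(y)|^2\le \min\{L_0|x-y|,\,2M\}\cdot\int_0^{|x-y|}|\nabla u(x+s\omega)|\,\ud s
\]
together with the threshold at $r=1/L_0$ is exactly what isolates a single factor of $\log L_0$, and your handling of the Fubini domain---restricting to $\{x\in B_1:x+s\omega\in B_2\}$ so that the shifted integral is dominated by $\int_{B_2}|\nabla u|$ uniformly in $(s,\omega)$---correctly closes the estimate. One inconsequential slip: the weight integral $\int_0^{1/L_0}L_0\log\big(1/(L_0 s)\big)\,\ud s$ equals $1$, not $1/L_0$ (your own substitution $t=L_0 s$ yields $\int_0^1\log(1/t)\,\ud t=1$); this does not affect the conclusion that the small-scale regime contributes $C\int_{B_2}|\nabla u|$ with no logarithm.
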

This lemma is proved in \cite[Lemma 3.1]{FS20} for the kernel $\frac{1}{|x|^3}$, and we conclude this lemma for {the} kernel $\bar{K}$ since $\bar{K}(x)\leq \frac{1}{|x|^3}$ due to Proposition \ref{prop_kernel2}.

With this sharp interpolation lemma and the interior BV estimate in Lemma \ref{BV}, we are ready to obtain the energy estimates in any balls below.
\begin{prop}\label{energy-es}
Let $|u|\leq M$ be a bounded stable solution to \eqref{1D-maineq} satisfying Definition \ref{def_stable}. Assume {that} $W$ satisfies \eqref{potential} and  $L_*:=\max\{2, \|W\|_{C^{2,\alpha}_b(\bR)}\}$. Then there exists constant $C(\beta, M, L_*)$ depending only on $\beta, M$ and $L$ such that for any $B_R\subset \bR^2$ and $R\geq 1$,
\begin{equation}\label{uniform-e}
\int_{B_R} |\nabla u| \ud x \leq C(\beta, M,L_*) R \log (L_*R ), \qquad \cE(u, B_R)\leq C(\beta, M,L_*) R \log^2(L_*R).
\end{equation}
\end{prop}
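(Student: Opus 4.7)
The plan is to combine Lemma \ref{BV} and Lemma \ref{interpolation} (both suitably rescaled) into a self-improving recursive inequality for $V(R):=\int_{B_R}|\nabla u|$, and then iterate this inequality, bootstrapping from the trivial a priori bound $V(R)\le CR^2$ up to the sharper $V(R)\le CR\log(L_*R)$. The energy bound $\cE(u,B_R)\le CR\log^2(L_*R)$ then follows by one final application of interpolation. I would start by observing that $\|\nabla u\|_{L^\infty(\bR^2)}\le C(\beta,M,L_*)$ by standard Schauder-type regularity for translation-invariant nonlocal operators with positive, symmetric, homogeneous kernel of order one (whose hypotheses are verified in Proposition \ref{prop_kernel2}); this gives both the crude estimate $V(R)\le CR^2$ and ensures that the factor $L_0$ appearing in Lemma \ref{interpolation} obeys $\log L_0\le C\log(L_*\rho)$ after rescaling.

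Applying Lemma \ref{BV} \eqref{BV2} and Lemma \ref{interpolation} to the rescaled stable solution $u_\rho(x):=u(\rho x)$, and using the identities $\cE(u_\rho,B_1)=\rho^{-1}\cE(u,B_\rho)$ and $|\nabla u_\rho(x)|=\rho|\nabla u(\rho x)|$, yields for every $R\ge 1$ (with $E(R):=\cE(u,B_R)$):
\begin{equation*}
V(R)\le CR+C\sqrt{R\,E(2R)},\qquad E(R)\le C\log(L_*R)\bigl(R+V(2R)\bigr),
\end{equation*}
and substituting the second into the first gives the key recursion
\begin{equation*}
V(R)\le C_1R\sqrt{\log(L_*R)}+C_2\sqrt{R\log(L_*R)\,V(4R)}.
\end{equation*}
By induction on $k\ge 0$ I would prove that there exist constants $C_k$, uniformly bounded in $k$, such that $V(R)\le C_k R^{1+2^{-k}}(\log(L_*R))^{1-2^{-k}}$ for all $R\ge 1$. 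The base case $k=0$ is the crude quadratic bound. For the induction step, plugging the level-$k$ bound into the rescaled interpolation yields $E(2R)\le C'C_k R^{1+2^{-k}}\log^{2-2^{-k}}(L_*R)$, and then feeding this into the rescaled BV inequality produces the level-$(k+1)$ bound with the recurrence $C_{k+1}\le A\sqrt{C_k}$ for some absolute $A$. Because the map $x\mapsto A\sqrt{x}$ has the attractive fixed point $A^2$, the sequence $\{C_k\}$ stays bounded.

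To conclude, for each fixed $R\ge 1$ I rewrite the level-$k$ bound as $V(R)\le C_k\,R\log(L_*R)\cdot\bigl(R/\log(L_*R)\bigr)^{2^{-k}}$; since the last factor tends to $1$ as $k\to\infty$, selecting $k=k(R)$ so that it does not exceed $2$ gives $V(R)\le 2(\sup_k C_k)\,R\log(L_*R)$, which is the first claimed estimate, and a final substitution back into the rescaled interpolation produces $E(R)\le CR\log^2(L_*R)$. The main obstacle is the direction of the recursion used in the bootstrap: it controls $V$ at scale $R$ in terms of $V$ at the \emph{larger} scale $4R$, so any attempt to close the inequality by directly plugging in a uniform Ansatz like $V(R)\le CR\log(L_*R)$ is circular. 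The remedy is the Moser-type iteration on the exponent pair $(\gamma_k,\beta_k)=(1+2^{-k},\,1-2^{-k})$ in tandem with the contractive recurrence $C_{k+1}\le A\sqrt{C_k}$, which is precisely what guarantees that the exponents converge to the optimal pair $(1,1)$ while the multiplicative constants remain bounded along the iteration.
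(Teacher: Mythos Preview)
Your approach is correct but takes a genuinely different route from the paper's. After combining Lemma~\ref{BV} and Lemma~\ref{interpolation}, the paper immediately linearizes the square root via Young's inequality, obtaining
\[
\int_{B_{1/2}}|\nabla u|\,\ud x \;\le\; \frac{C(\beta,M)\log L_2}{\delta} \;+\; \delta\int_{B_2}|\nabla u|\,\ud x,
\]
and then rescales this to all balls and invokes a ``standard iteration argument'' (choose $\delta$ small and iterate across dyadic scales; the $\delta$-factor beats the crude quadratic growth and the remaining geometric series converges) to conclude $\int_{B_{1/2}}|\nabla u|\le C\log L_2$. A final rescaling $u_R(x)=u(2Rx)$ transfers this to $B_R$. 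You instead keep the square-root recursion $V(R)\le C_1R\sqrt{\log(L_*R)}+C_2\sqrt{R\log(L_*R)\,V(4R)}$ intact and run a Moser-type bootstrap on the exponent pair, driving $V(R)\le C_k R^{1+2^{-k}}(\log L_*R)^{1-2^{-k}}$ from the crude bound toward the sharp one, with the contractive recurrence $C_{k+1}\le A\sqrt{C_k}+C''$ ensuring the constants stay bounded. Both arguments rest on exactly the same two lemmas, the same scaling identities, and the same global Lipschitz bound; the paper's linearized version is shorter and more standard (one free parameter $\delta$, one geometric series), while your version makes the self-improvement mechanism more transparent at the cost of tracking two exponent sequences and the constant sequence. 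A minor simplification of your last step: since $\sup_k C_k<\infty$, you may simply let $k\to\infty$ in the level-$k$ bound rather than choosing $k=k(R)$.
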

\begin{proof}
First, by interior regularity estimate for $\cL$, for $L_1:=\|W\|_{C^{2,\alpha}_b(\bR)}$, we have
\begin{equation*}
\|\nabla u\|_{L^\8(B_2)}\leq C L_1.
\end{equation*}
Denote $L_2:=\max\{2, CL_1\}$. Then combining \eqref{BV2} and Lemma \ref{interpolation}, we have
\begin{equation}\label{tm419}
\begin{aligned}
\int_{B_{\frac12}} |\nabla u(x)| \ud x &\leq C(\beta,M) \left(1+\sqrt{C(M)\log L_2 \left(1+\int_{B_2}|\nabla u| \ud x\right)}\right).\\
&\leq \frac{C(\beta,M) \log L_2}{\delta} + \delta \int_{B_2}|\nabla u| \ud x,
\end{aligned}
\end{equation}
where we used Young's inequality.

Second, we prove {a} uniform bound by {a} scaling argument {and a standard iteration argument}. For any $z$, choose $\rho<1$ such that $B_\rho(z)\subset B_1$ and $\tilde{u}(x):=u(z+\frac{\rho}{2} x)$. Notice that $$
\bar{K}(\frac{2x}{\rho}) = (\rho/2)^3 \bar{K}(x)
$$
due to Proposition \ref{prop_kernel2}. Then $\tilde{u}$ satisfies \eqref{1D-maineq} with $W$ replaced by $\frac{\rho}{2} W$. Therefore, \eqref{tm419} still holds, i.e.,
\begin{equation*}
\int_{B_{\frac12}} |\nabla \tilde{u}(x)| \ud x \leq \frac{C(\beta,M) \log 2L_2}{\delta} + \delta \int_{B_2}|\nabla \tilde{u}| \ud x,
\end{equation*}
which is equivalent to
\begin{equation*}
\frac{1}{\rho} \int_{B_{\frac{\rho}{4}}(z)}|\nabla u| \ud x \leq \frac{C(\beta,M)\log L_2}{\delta} + \frac{\delta}{\rho} \int_{B_\rho(z)}|\nabla u| \ud x.
\end{equation*}
Then by 
{a standard iteration argument}, one obtain
\begin{equation*}
\int_{B_{\frac12}}|\nabla u| \ud x \leq C(\beta, M)\log L_2.
\end{equation*}
By the same scaling argument with $u_R(x):=u(z+2R x)$, one can obtain
\begin{equation*}
\int_{B_R{(z)}} |\nabla u| \ud x \leq C(\beta, M) R \log (CRL_2) \quad \text{ for any }R\geq 1.
\end{equation*}
Moreover by Lemma \ref{interpolation} and scaling argument, we also have
\begin{equation*}
\cE(u, B_R)\leq C(\beta, M) R \log^2(CRL_2) \quad \text{ for any }R\geq 1.
\end{equation*}
Therefore, we conclude \eqref{uniform-e}.
\end{proof}

\subsection{1D profile conclusion}
In this subsection, we are in the position to state and prove that any bounded stable solution to \eqref{1D-maineq} {has} a 1D monotone profile.

Now we give the main theorem in this section, which corresponds to the flatness result for 2D minimal surface with fractional anisotropic perimeters.
\begin{thm}\label{thm-1d}
Let $\beta=1-\nu\in(\frac23, \frac32)$. Assume that $|u|\leq M$ is a bounded stable solution to \eqref{1D-maineq} and $W$ satisfies \eqref{potential}. Then $u$ {has} a 1D monotone profile and $|u|\leq 1$. As a consequence, {any} bounded stable solution to \eqref{1D-neq} {also has a} 1D monotone profile and $|u|\leq 1$.  Moreover, the solution to \eqref{1D-neq} can be characterized as $u(x)=\phi(e\cdot x)$ for any $e:=(\cos \alpha, \sin \alpha)\in S^1$ with $\alpha \in (-\frac{\pi}{2},\frac{\pi}{2})$, where $\phi$  is the unique (up to translations) solution to 1D problem
\begin{equation}\label{1Dphi}
\begin{aligned}
(-\Delta)^{\frac12}\phi(x_1) &= - (\beta \cos^2 \alpha+ \sin^2\alpha) W'(\phi(x_1)), \, x_1\in\mathbb{R}\\
\lim_{x_1\to \pm\8}\phi(x_1)&=\pm 1.
\end{aligned}
\end{equation}
\end{thm}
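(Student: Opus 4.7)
My plan is to follow the scheme of \cite{CSV19,FS20,Gui19} adapted to the anisotropic operator $\bar{\cL}$: stability of $u$, together with the interior BV estimate of Lemma~\ref{BV} and the energy growth of Proposition~\ref{energy-es}, forces every directional derivative of $u$ to have constant sign on $\bR^2$; a connectedness argument on $S^1$ then extracts a 1D profile; and the equation satisfied by the profile is identified by evaluating the Fourier symbol of $\cL$ on the line supporting $\widehat{u}$.

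\emph{Step 1: constant sign of $\pt_{\cnu}u$ on $\bR^2$.} Since \eqref{1D-maineq} is translation invariant, $u(\cdot+x_0)$ is a bounded stable solution for every $x_0\in\bR^2$, with the same constants $M$ and $L_*$. Applying Lemma~\ref{BV} to this translate yields
\begin{equation*}
\Bigl(\int_{B_{1/2}(x_0)}(\pt_{\cnu}u)_+\Bigr)\Bigl(\int_{B_{1/2}(x_0)}(\pt_{\cnu}u)_-\Bigr)\le C(\beta,M)\,\frac{\mathcal{E}(u,B_R(x_0))}{R^2}\le C(\beta,M,L_*)\,\frac{\log^2(L_*R)}{R},
\end{equation*}
by Proposition~\ref{energy-es}. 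As $R\to\infty$ the right-hand side vanishes, so on each $B_{1/2}(x_0)$ only one of $(\pt_{\cnu}u)_\pm$ can be nonzero. Continuity of $\nabla u$ (from interior regularity for $\bar{\cL}$) and connectedness of $\bR^2$ then promote this to a single sign on all of $\bR^2$: if both signs were realized at two points, a ball $B_{1/2}(x_0)$ containing both would witness both signs, contradicting the vanishing product above.

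\emph{Step 2: 1D profile.} Define $S^\pm:=\{\cnu\in S^1 : \pm\pt_{\cnu}u\ge 0 \text{ on }\bR^2\}$. By Step~1, $S^+\cup S^-=S^1$; both sets are closed by continuity of $\nabla u$ in $(\cnu,x)$, and both are nonempty, since the alternative (say $S^-=\emptyset$, so $\pt_{\cnu}u\ge 0$ for every $\cnu$) applied simultaneously to $\cnu$ and $-\cnu$ would force $\nabla u\equiv 0$, contradicting the bistable far-field condition. Connectedness of $S^1$ then yields $S^+\cap S^-\neq\emptyset$, and any $\cnu^*$ in this intersection satisfies $\pt_{\cnu^*}u\equiv 0$. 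Hence $u(x)=\phi(e\cdot x)$ with $e:=(\cnu^*)^\perp$. After replacing $e$ by $-e$ if necessary, the far-field condition rules out $\cos\alpha=0$ and selects $e=(\cos\alpha,\sin\alpha)$ with $\alpha\in(-\pi/2,\pi/2)$, giving $\phi(\pm\infty)=\pm 1$.

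\emph{Step 3: 1D equation, $|u|\le 1$, and transfer.} With $u(x)=\phi(e\cdot x)$, $\widehat{u}$ is supported on the line $\{\lambda e:\lambda\in\bR\}$; there $|k|=|\lambda|$, $k_1=\lambda\cos\alpha$, $k_2=\lambda\sin\alpha$, so the symbol $|k|^3/(\beta k_1^2+k_2^2)$ of $\cL$ evaluates to $|\lambda|/(\beta\cos^2\alpha+\sin^2\alpha)$. Hence $\cL u=-W'(u)$ reduces exactly to \eqref{1Dphi}, and the classical results of \cite{CS05,PSV13} on the 1D bistable fractional problem supply existence, uniqueness up to translation, strict monotonicity, and the bound $|\phi|\le 1$, completing the characterization. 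The primary statement for \eqref{1D-maineq} is obtained by the identical scheme applied to $\bar u$ and $\bar{\cL}$, and the conclusions transfer between \eqref{1D-maineq} and \eqref{1D-neq} via the stretch $(x_1,x_2)=(\sqrt\beta\,\bar x_1,\bar x_2)$ introduced just before \eqref{1D-maineq}. The main obstacle is Step~1: checking that translation invariance preserves the constant in Lemma~\ref{BV} (immediate) and that the $R\log^2(L_*R)$ energy growth is slow enough to drive the upper bound to zero as $R\to\infty$. Steps~2 and~3 are then essentially routine: a connectedness argument on $S^1$ and a direct symbol evaluation.
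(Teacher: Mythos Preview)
Your proposal is correct and follows essentially the same route as the paper: combine the interior BV estimate of Lemma~\ref{BV} with the energy growth of Proposition~\ref{energy-es} and let $R\to\infty$ to force each $\partial_{\cnu}u$ to have a single sign, then read off the 1D equation \eqref{1Dphi} by evaluating the Fourier symbol of $\cL$ on the line supporting $\widehat{u}$. Your Step~2 connectedness argument on $S^1$ and the explicit handling of the stretch between \eqref{1D-maineq} and \eqref{1D-neq} are details the paper leaves implicit; the only phrasing slip is in Step~1, where the two points of opposite sign need not lie in a common $B_{1/2}$, but the standard chain-of-overlapping-balls (or closed-cover connectedness) argument you clearly have in mind fixes this immediately.
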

\begin{proof}
{Combining} the uniform energy estimate \eqref{uniform-e} with the interior BV estimate \eqref{BV1}, taking $R\to +\8$, we know {that}
\begin{equation*}
\left( \int_{B_{\frac12}}(\pt_{\cnu} u(x))_+ \ud x \right)\left( \int_{B_{\frac12}}(\pt_{\cnu} u(y))_- \ud y \right)=0.
\end{equation*}
Since this is true for any direction $\cnu\in S^1$ and any half ball in $\bR^2$, we have
\begin{equation*}
\pt_{\cnu} u \geq 0  \text{ in }\bR^2 \quad \text{ or } \quad \pt_{\cnu} u \leq 0 \text{ in } \bR^2 \qquad \text{ for any }\nu\in S^1,
\end{equation*}
which yields the conclusion {that} $u$ {has} a 1D monotone profile.

Next, we prove that $u$ is given by $\phi(e\cdot x)$ and $\phi$ is the solution to the 1D problem \eqref{1Dphi}.

Let the direction $e$  be $e=(\cos \alpha, \sin \alpha)$ for some $\alpha$. Due to the far field boundary condition \eqref{BC_far}, we consider only the case $\alpha\in (-\frac{\pi}{2},\frac \pi 2)$.
Define the rotation matrix
\begin{equation*}
R:=\left[ \begin{array}{cc}
\cos \alpha & -\sin \alpha \\
\sin \alpha & \cos \alpha
\end{array}  \right] \quad \text{with } \text{det} R = 1,\, R^{-1}=R^T.
\end{equation*}
Define {the} new coordinates under the rotation matrix as
\begin{equation*}
\left( \begin{array}{c}
\bar{x}_1\\
\bar{x}_2
\end{array} \right):= R^T \left( \begin{array}{c}
{x}_1\\
{x}_2
\end{array} \right)= \left[ \begin{array}{cc}
\cos \alpha & \sin \alpha \\
-\sin \alpha & \cos \alpha
\end{array}  \right] \left( \begin{array}{c}
{x}_1\\
{x}_2
\end{array} \right), \quad
\left( \begin{array}{c}
\bar{k}_1\\
\bar{k}_2
\end{array} \right):= R^T \left( \begin{array}{c}
{k}_1\\
{k}_2
\end{array} \right).
\end{equation*}
Then we find $\phi$ such that $u(x)=\phi(e\cdot x)$ satisfies  \eqref{original}, i.e.,
\begin{equation*}
 -\F(W'(u))(k) = -\F(W'(\phi(e\cdot x))) =\frac{|k|^3}{\beta k_1^2 + k_2^2} \F(u)(k) = \frac{|k|^3}{\beta k_1^2 + k_2^2} \F(\phi(e\cdot x))(k).
\end{equation*}
This, together with the property that Fourier transform commutes with rotations, implies
\begin{equation*}
\begin{aligned}
&
\frac{|k|^3}{\beta k_1^2 + k_2^2} \F(\phi(\bar{x}_1))(\bar{k}_1) \delta(\bar{k}_2)\\
=& -\F(W'(\phi(e\cdot x))) = -\F(W'(\phi))(\bar{k}_1) \delta(\bar{k}_2).
\end{aligned}
\end{equation*}
Therefore, $\phi(x_1)$ is the solution to \eqref{1Dphi}, or equivalently
\begin{equation*}
|k_1| \hat{\phi}(k_1) = -\tilde{\beta} \F(W'(\phi)) (k_1)
\end{equation*}
with $\tilde{\beta}$ satisfying
\begin{equation*}
\frac{|\bar{k}_1|}{\tilde{\beta}} = \frac{|k|^3}{\beta k_1^2 + k_2^2} , \quad \bar{k}_2= -k_1 \sin \alpha+ k_2 \cos\alpha =0.
\end{equation*}
Then elementary calculations yield
\begin{equation*}
\tilde{\beta}=\beta \cos^2 \alpha+ \sin^2\alpha.
\end{equation*}
From \cite{CS05, PSV13}, the solution $\phi$ to \eqref{1Dphi}  is unique (up to translations), bounded, increasing from $-1$ to $1$, and a local minimizer of the isotropic nonlocal energy
$$
E^i_\Gamma={\frac 1 2}\int_{\bR} u (-\Delta)^{\frac12} u \ud x +\tilde{\beta} \int_{\bR} W(u) \ud x.
$$
Thus the second local variation of $E_\Gamma^i$ is nonnegative; see also \cite{GL19} for  the positivity of the linearized operator $(-\Delta)^\frac12+ \tilde{\beta} W''(\phi)I$.   Therefore, $u(x)=\phi(e\cdot x)$   characterizes the bounded stable solutions to \eqref{1D-neq}.
\end{proof}

\begin{rem}
It is easy to verify that the local minimizer of {the} energy $E^0_\Gamma$ is a bounded stable solution to \eqref{1D-maineq}. From the proof of \cite[Remark 1.4]{Gui19} and Theorem \ref{thm-1d}, one also {knows that any} bounded stable solution for $\frac23<\beta<\frac32$ {has a} 1D monotone profile and thus  a local minimizer. That is to say, for \eqref{1D-maineq} (also \eqref{1D-neq}) with $\frac23<\beta<\frac32$, bounded stable solutions and local minimizers are {the} same set and both are 1D monotone.
\end{rem}

\begin{rem} Let $\nu=1-\beta\in(-\frac12, \frac13)$. From the solution $u_1$ to \eqref{maineq1D}, one can further solve the other two components $u_2, u_3$ by \eqref{u13} and the elastic extension \cite{GLLX19} based on the Dirichlet to Neumann map. Finally,  the stable solution to the full system \eqref{maineq} is {completely} solved.
\end{rem}


\appendix

\section{Derivation of Euler-Lagrange equation}\label{appA}
\begin{proof}[Proof of Lemma \ref{Lem2.2}]
From Definition \ref{minimizer} of {local minimizers}, we calculate the variation of {the} energy in terms of a perturbation with compact support in an arbitrary ball ${B_R}$.
For any $ \mathbf v\in C^\infty(B_R\backslash \Gamma)$ such that $ \mathbf v$ has compact support in ${B_R}$ and satisfies \eqref{bcphi}, we consider the perturbation $\delta \mathbf v$, where $\delta$ is a small real number. We denote  $\varepsilon:=\varepsilon(\mathbf u)$, $\sigma:=\sigma(\mathbf u)$ and $\varepsilon_1:=\varepsilon(\mathbf v)$, $\sigma_1:=\sigma(\mathbf v)$.
Then we have that
\begin{equation*}
\begin{aligned}
~&\lim_{\delta\to 0}\frac{1}{\delta} (E(\mathbf u+\delta \mathbf v)-E(\mathbf u))\\
=& \int_{B_R\backslash \Gamma}\frac{1}{2}(\sigma_1:\varepsilon+ \sigma:\varepsilon_1)\ud x  +\int_{{B_R}\cap\Gamma} \pt_{1}W(u_1^+, u_2^+)v_1^++ \pt_{2} W(u_1^+, u_2^+) v_2^+ \ud \Gamma\\
=&\int_{{B_R}\backslash \Gamma}\sigma:\varepsilon_1\ud x  +\int_{{B_R}\cap\Gamma}  \pt_{1}W(u_1^+, u_2^+)v_1^++ \pt_{2} W(u_1^+, u_2^+) v_2^+ \ud\Gamma\\
=&\int_{B_R\backslash \Gamma}\sigma:\nabla\mathbf v\ud x  +\int_{B_R\cap\Gamma} \pt_{1}W(u_1^+, u_2^+)v_1^++ \pt_{2} W(u_1^+, u_2^+) v_2^+ \ud\Gamma\\
=&-\int_{B_R\backslash \Gamma}\partial_j\sigma_{ij} v_i\ud x +\int_{B_R\cap\Gamma}\sigma_{ij}^+ n_j^+ v_i^+ \ud \Gamma \\
&\quad + \int_{B_R\cap\Gamma}\sigma_{ij}^- n_j^- v_i^- \ud \Gamma
 +\int_{B_R\cap\Gamma}\pt_{1}W(u_1^+, u_2^+)v_1^++ \pt_{2} W(u_1^+, u_2^+) v_2^+ \ud\Gamma \geq 0,
\end{aligned}
\end{equation*}
where we used the property that $\sigma$  and $\nabla \cdot \sigma$ are locally integrable in $\{x_3>0\}\cup\{x_3<0\}$ when carrying out the integration by parts, and
 the outer normal vector of the boundary $\Gamma$ is
 $\mathbf n^+$ (resp. $\mathbf n^-$) for the upper  {half-plane} (resp. lower half-plane). Similarly, taking perturbation as $-\mathbf v$, we have
 \begin{equation*}
\begin{aligned}
~&\lim_{\delta\to 0}\frac{1}{\delta} (E(\mathbf u-\delta \mathbf v)-E(\mathbf u))\\
=&\int_{B_R\backslash \Gamma}\partial_j\sigma_{ij} v_i\ud x -\int_{B_R\cap\Gamma}\sigma_{ij}^+ n_j^+ v_i^+ \ud \Gamma \\
&\quad - \int_{B_R\cap\Gamma}\sigma_{ij}^- n_j^- v_i^- \ud \Gamma
 -\int_{B_R\cap\Gamma} \pt_{1}W(u_1^+, u_2^+)v_1^++ \pt_{2} W(u_1^+, u_2^+) v_2^+ \geq 0.
\end{aligned}
\end{equation*}
 Hence
 \begin{align*}
&-\int_{B_R\backslash \Gamma}\partial_j\sigma_{ij} v_i\ud x +\int_{B_R\cap\Gamma}\sigma_{ij}^+ n_j^+ v_i^+ \ud \Gamma \\
&\quad + \int_{B_R\cap\Gamma}\sigma_{ij}^- n_j^- v_i^- \ud x \ud z
 +\int_{B_R\cap\Gamma}\pt_{1}W(u_1^+, u_2^+)v_1^++ \pt_{2} W(u_1^+, u_2^+) v_2^+ \ud\Gamma= 0.
 \end{align*}
  Noticing that $\mathbf n^+=(0,0,-1)$ and $\mathbf n^-=(0,0,1)$, we have
 \begin{equation*}
 \begin{aligned}
&\int_{B_R\cap\Gamma}\sigma_{ij}^+ n_j^+ v_i^+ \ud \Gamma + \int_{B_R\cap\Gamma}\sigma_{ij}^- n_j^- v_i^- \ud \Gamma \\
=& \int_{B_R\cap\Gamma}-\sigma_{33}^+  v_3^+ \ud x \ud z+ \int_{B_R\cap\Gamma}\sigma_{33}^-  v_3^- \ud \Gamma+ \int_{B_R\cap\Gamma}-\sigma_{13}^+  v_1^+ \ud \Gamma+ \int_{B_R\cap\Gamma}\sigma_{13}^-  v_1^- \ud \Gamma\\
&+  \int_{B_R\cap\Gamma}-\sigma_{23}^+  v_2^+ \ud \Gamma+ \int_{B_R\cap\Gamma}\sigma_{23}^-  v_2^- \ud \Gamma.
 \end{aligned}
 \end{equation*}
Recall that $v_1^+=-v_1^-$, $v_3^+=v_3^-$ and $v_2^+=-v_2^-$. Hence due to the arbitrariness of $R$, we conclude that the minimizer  $\mathbf u$ must  satisfy
\begin{equation*}
\begin{aligned}
&\int_{\Gamma}\left[\sigma_{13}^+ + \sigma_{13}^- -\pt_{1}W (u_1^+, u_2^+)\right] v_1^+ \ud \Gamma=0,\\
&\int_{\Gamma}\left[\sigma_{23}^+ + \sigma_{23}^- -\pt_{2}W (u_1^+, u_2^+)\right] v_2^+ \ud \Gamma=0,\\
&\int_\Gamma \left(\sigma_{33}^+-\sigma_{33}^-\right) v_3^+ \ud \Gamma =0,\\
&\int_{\mathbb{R}^2\backslash \Gamma} (\nabla\cdot \sigma) \cdot\mathbf v~ \ud x\ud y \ud z =0
\end{aligned}
\end{equation*}
for any $\mathbf v\in  C^\infty(B_R\backslash\Gamma)$ and $ \mathbf v$ has compact support in $B_R$,
 which leads to the Euler--Lagrange equation \eqref{maineq}. Here we write the equation $\nabla\cdot \sigma=0$ in $\mathbb{R}^2 \backslash \Gamma$ as the first equation of \eqref{maineq} in terms of the displacement $\mathbf u$, using the constitutive relation.
\end{proof}

\section{Dirichlet to Neumann map}\label{appB}
\begin{proof}[Proof of Lemma \ref{D2N}]
Step 1. We take the Fourier transform of the elastic equations in \eqref{maineq} with respect to $x_1,x_2$ and denote the corresponding Fourier variables as $k_1, k_2$.

Due to \eqref{BC_far}, $\mathbf{u}$ is unbounded and we take the Fourier transform for $\mathbf{u}$ with respect to $x_1, x_2$ by regarding them as tempered distributions.  For notation simplicity, denote the Fourier transforms {to be} $\hat{\mathbf{u}}$.
 Let $k=(k_1, k_2)$ and $|k|=\sqrt{k^2_1+k^2_2}$. We have
\begin{align}
(1-2\nu) \pt_{33} \hat{u}_1 - [(2-2\nu)k_1^2 + (1-2\nu)k_2^2] \hat{u}_1 + i k_1 \pt_3 \hat{u}_3 - k_1 k_2 \hat{u}_2 = 0,\label{u1}\\
(2-2\nu)\pt_{33}\hat{u}_3 - (1-2\nu)|k|^2\hat{u}_3 + i k_1 \pt_3 \hat{u}_1 + ik_2 \pt_3 \hat{u}_2 = 0, \label{u2}\\
(1-2\nu) \pt_{33} \hat{u}_2 - [(2-2\nu)k_2^2 + (1-2\nu)k_1^2] \hat{u}_2 + i k_2 \pt_3 \hat{u}_3 - k_1 k_2 \hat{u}_1 = 0.\label{u3}
\end{align}

We can first eliminate $\hat{u}_2$ using \eqref{u1}, then eliminate $\hat{u}_3$ and obtain the ODE for $\hat{u}_1$
\begin{equation*}
\pt_3^4 \hat{u}_1 - 2 |k|^2 \pt_3^2 \hat{u}_1+ |k|^4 \hat{u}_1=0.
\end{equation*}
Next we use this ODE for $\hat{u}_1$ to simplify \eqref{u1}, \eqref{u2}, and \eqref{u3} again and then eliminate $\hat{u}_1$ and $\hat{u}_2$ together. We obtain the ODE for $\hat{u}_3$
\begin{equation*}
\pt_3^4 \hat{u}_3 - 2|k|^2 \pt_3^2 \hat{u}_3 +|k|^4 \hat{u}_3 = 0.
\end{equation*}
{By the symmetry of} $\hat{u}_1$ and $\hat{u}_2$, we have {the} same ODE for  $\hat{u}_2$.

{We} seek for solutions whose derivatives have decay properties, which exclude exponentially growing solutions as $|x_3|\to +\8$.  Denote
 \begin{equation*}
 \hat{u}_1^- = (A^- + B^-|k| x_3)e^{|k|x_3}, \quad x_3<0,
 \end{equation*}
 where $A^-, B^-$ are constants to be determined. Similarly, denote
 \begin{equation*}
  \hat{u}_3^- = (C^- + D^-|k| x_3)e^{|k|x_3}, \quad  \hat{u}_2^- = (E^- + F^- x_3|k|)e^{|k|x_3},  \quad x_3<0,
 \end{equation*}
  where $C^-, D^-, E^-, F^-$ are constants to be determined. For $x_3>0$, we have another six constants $A^+, B^+, C^+, D^+, E^+, F^+$ to be determined and for $x_3>0$,
  \begin{equation*}
\begin{aligned}
 \hat{u}_1^+ &= (A^+ - B^+|k| x_3)e^{-|k|x_3}, \\
 \hat{u}_3^+ &= (C^+ - D^+ |k|x_3)e^{-|k|x_3}, \\
 \hat{u}_2^+ &= (E^+ - F^+ |k|x_3)e^{-|k|x_3}.
 \end{aligned}
  \end{equation*}

  Step 2. Given the Dirichlet values of $u_1$ and $u_2$, we express all the other constants by $A^\pm$ and $E^\pm$.

First, plugging $\hat{u}_1^-$, $\hat{u}_2^-$, and $\hat{u}_3^-$ into \eqref{u1}, we have
  \begin{equation*}
  (2-4\nu)|k|^2B^- - k_1^2 A^- + i k_1 (C^-|k|+ D^- |k|)- k_1 k_2 E^-=0
  \end{equation*}
{and}
  \begin{equation*}
  -k_{1}^{2} B^{-}+i k_{1} D^{-}|k|-k_{1} k_{2} F^{-}=0.
  \end{equation*}
  Plugging $\hat{u}_1^-$, $\hat{u}_2^-$, and $\hat{u}_3^-$ into \eqref{u2}, we have
  \begin{equation*}
|k|^{2} C^{-}+(4-4\nu)|k|^{2} D^{-}+i k_{1}|k| A^{-}+i k_{1}|k| B^{-}+i k_{2}|k| E^{-}+i k_{2}|k| F^{-}=0
  \end{equation*}
  and
  \begin{equation*}
|k|^2 D^- + i k_1 |k|B^- + i k_2 |k|F^- =0.
  \end{equation*}
  Plugging $\hat{u}_1^-$, $\hat{u}_2^-$, and $\hat{u}_3^-$ into \eqref{u3}, we have
  \begin{equation*}
  (2-4\nu)|k|^2F^- - k_2^2 E^- + i k_2 (C^-|k|+ D^- |k|)- k_1 k_2 A^-=0
  \end{equation*}
  and
  \begin{equation*}
  -k_{2}^{2} F^{-}+i k_{2} D^{-}|k|-k_{1} k_{2} B^{-}=0.
  \end{equation*}
  Simplifying these relations gives us
  \begin{equation*}
  \begin{aligned}&{B^{-}=\frac{i k_{1}}{|k|} D^{-}} ,\quad {F^-=\frac{i k_{2}}{|k|} D^{-}}, \\ &{-k_{1} A^--k_{2} E^{-}+i|k| C^-=(4 \nu-3) i|k| D^{-}.}\end{aligned}
  \end{equation*}
  Combining this with the boundary symmetry \eqref{BC}, we have
  \begin{equation*}
  A^+ = - A^-, \ \ B^+ = -B^-,\ \ C^+= C^-,\ \  D^+= D^-,\ \  E^+ = -E^-, \ \ F^+= -F^-.
  \end{equation*}
  Then by $\sigma_{33}^+= \sigma_{33}^-$ on $\Gamma$ in \eqref{maineq}, we further obtain  $C^-= (2\nu-1)D^-$. Therefore, all {the} other constants can be expressed {in terms of} $A^-$ and $E^-$. In particular, we conclude that $\sigma_{13}(x_1,x_2, 0^+)$ and $\sigma_{23}(x_1,x_2, 0^+)$ can be expressed as {in} \eqref{curve}.
\end{proof}

\bibliographystyle{plain}
\bibliography{bibpn1}

\end{document}